\tikzset{snake it/.style={decorate, decoration=snake}}
\tikzset{snake it/.style={decorate, decoration=snake}}
\newtheorem{theorem}{Theorem}[section]
\newtheorem{lemma}[theorem]{Lemma}
\newtheorem{proposition}{Proposition}[section]
\theoremstyle{definition}
\newtheorem{definition}[theorem]{Definition}
\newtheorem{corollary}[theorem]{Corollary}
\newtheorem{conjecture}[theorem]{Conjecture}
\newtheorem{example}[theorem]{Example}
\theoremstyle{remark}
\newtheorem{remark}[theorem]{Remark}
\numberwithin{equation}{section}
\let\@wraptoccontribs\wraptoccontribs
\begin{document}

\title{The Algebra of $S^2$-Upper Triangular Matrices}

\author{Steven R. Lippold}
\address{Department of Mathematics, Taylor University, Upland, IN 46989}
\email{steve\_lippold@taylor.edu}

\subjclass[2020]{Primary  15B99, Secondary 15A15, 15A72, 15A23   }

\keywords{generalization of triangular matrices, generalization of determinant map}

\begin{abstract} Based on work presented in \cite{edge}, we define $S^2$-Upper Triangular Matrices and $S^2$-Lower Triangular Matrices, two special types of $d\times d(2d-1)$ matrices generalizing Upper and Lower Triangular Matrices, respectively. Then, we show that the property that the determinant of an Upper Triangular Matrix is the product of its diagonal entries is generalized under our construction. Further, we construct the algebra of $S^2$-Upper Triangular Matrices and give conditions for an LU-Decomposition with $S^2$-Lower Triangular and $S^2$-Upper Triangular Matrices, respectively.
\end{abstract}

\maketitle


%

\section{Introduction}

One common type of special matrix with a variety of applications is that of the Upper Triangular matrix. Notably, it is well known that if $A$ is an Upper Triangular matrix, then the determinant $det(A)$ is the product of its diagonal entries. In addition, matrices that satisfy certain conditions can be factored in terms of the product of an upper and lower triangular matrix (so called LU Factorization), which is often more desirable computationally. 

In \cite{sta2}, a generalization of the determinant map, denoted $det^{S^2}$, was first constructed and has been subsequently studied further in \cite{SDiss}, \cite{edge}, and \cite{dets2}. Namely, if $V_d$ is a vector space of dimension $d$ over a field $k$, there exists a map $det^{S^2}:V_d^{\otimes d(2d-1)}\to k$ with the property that $det^{S^2}(\otimes_{1\leq i<j\leq 2d}(v_{i,j}))=0$ if there exists $1\leq x<y<z\leq 2d$ such that $v_{x,y}=v_{x,z}=v_{y,z}$. Further, this map is unique up to a constant if $d=2$ or $d=3$ \cite{edge}. An interesting follow-up is to construct a class of matrices that generalize the idea of Upper Triangular Matrix, but in relation to the map $det^{S^2}$. In this paper, we give such a construction.

Now, we give an outline to this paper. In Section 2, we start by recalling some terminology concerning graphs before giving an equivalent definition of Triangular Matrices which is useful for this paper. Then, we give some basic results concerning Triangular Matrices and determinant map. Following this, we give some basic results from \cite{edge} and \cite{dets2} regarding a vector space $\mathcal{T}^{S^2}_{V_d}[2d]$, a special element $E^{(2)}_d\in\mathcal{T}^{S^2}_{V_d}[2d]$, and the map $det^{S^2}$.

In Section 3, we give two new constructions. First, we define the $S^2$-Diagonal of a $d\times d(2d-1)$ matrix, which generalizes the idea of the diagonal of a square matrix. Then, we define $S^2$-Triangular Matrices before giving a result connecting $det^{S^2}$, $S^2$-Upper Triangular Matrices, and the $S^2$-diagonal. 

In Section 4, we construct a $k$-algebra of $S^2$-Upper Triangular Matrices, starting first with the case $d=2$ and extending it for $d>2$. We define a multiplication on $d\times d(2d-1)$ matrices and give some conditions for an LU-Decomposition using $S^2$-Lower Triangular Matrices and $S^2$-Upper Triangular Matrices, respectively. 

\section{Preliminaries}
In this paper, we assume that $k$ is a field of characteristic not 2 or 3, $\otimes=\otimes_k$, and that $V_d$ is a vector space of dimension $d$ over $k$. Given a vector $v\in V_d$, we will take $v=(v^1,v^2,\ldots,v^d)$, where $v^i\in k$ for $1\leq i\leq d$.  Lastly, we take $\mathcal{B}_d=\{e_1,e_2,\ldots,e_d\}$ to be basis for $V_d$.
\subsection{Edge $d$-Partitions of Graphs}
First, we will cover some basics of edge $d$-partitions of graphs. Throughout this paper, we will assume that graphs are undirected, do not have multiple edges or loops, and are not weighted. 

Regarding notation, we will denote the vertex set of a graph $\Gamma$ as $V(\Gamma)$ and the edge set as $E(\Gamma)$. Next, recall the following definition.

\begin{definition}[\cite{edge}]
    Let $\Gamma$ be a graph and $\Gamma_1,\Gamma_2,\ldots,\Gamma_d$ be subgraphs of $\Gamma$. Then, $(\Gamma_1,\Gamma_2,\ldots,\Gamma_d)$ is an edge $d$-partition of $\Gamma$ if 
    \begin{enumerate}
        \item For all $1\leq i\leq d$, $V(\Gamma_i)=V(\Gamma)$.
        \item For all $1\leq i<j\leq d$, $E(\Gamma_i)\cap E(\Gamma_j)$ is empty.
        \item $\displaystyle\cup_{i=1}^dE(\Gamma_i)=E(\Gamma)$.
    \end{enumerate}
    We say that an edge $d$-partition is cycle-free if each of the subgraphs $\Gamma_i$ for $1\leq i\leq d$ are cycle free.

    We say that an edge $d$-partition is homogeneous if $|E(\Gamma_i)|=|E(\Gamma_j)|$ for all $1\leq i<j\leq d$.

    We will denote the set of edge $d$-partitions of $\Gamma$ as $\mathcal{P}_d(\Gamma)$ and the set of homogeneous cycle-free edge $d$-partitions of $\Gamma$ as $\mathcal{P}_d^{h,cf}(\Gamma)$.
\end{definition}
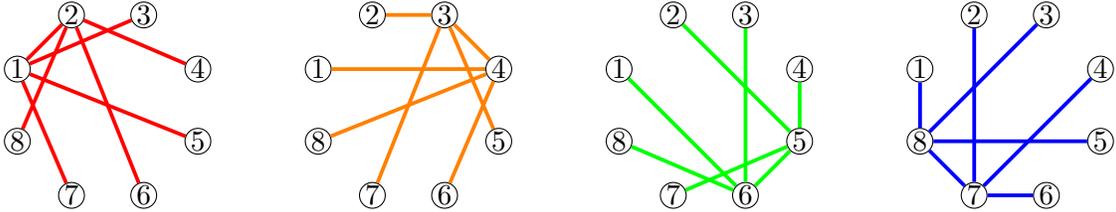
\begin{figure}[h!]
	\centering
	\begin{tikzpicture}
		[scale=0.8,auto=left,every node/.style={shape = circle, draw, fill = white,minimum size = 1pt, inner sep=0.3pt}]
		\node (n1) at (0,2.1) {1};
		\node (n2) at (0.9,3)  {2};
		\node (n3) at (2.1,3)  {3};
		\node (n4) at (3,2.1)  {4};
		\node (n5) at (3,0.9)  {5};
		\node (n6) at (2.1,0)  {6};
		\node (n7) at (0.9,0)  {7};
		\node (n8) at (0,0.9)  {8};
		\foreach \from/\to in {n1/n2,n1/n3,n1/n5,n1/n7,n2/n4,n2/n6,n2/n8}
		\draw[line width=0.5mm,red]  (\from) -- (\to);	
		\node (n12) at (5,2.1) {1};
		\node (n22) at (5.9,3)  {2};
		\node (n32) at (7.1,3)  {3};
		\node (n42) at (8,2.1)  {4};
		\node (n52) at (8,0.9)  {5};
		\node (n62) at (7.1,0)  {6};
		\node (n72) at (5.9,0)  {7};
		\node (n82) at (5,0.9)  {8};
		\foreach \from/\to in {n32/n22,n32/n42,n32/n52,n32/n72,n42/n12,n42/n62,n42/n82}
		\draw[line width=0.5mm,orange]  (\from) -- (\to);	
		
		\node (n13) at (10,2.1) {1};
		\node (n23) at (10.9,3)  {2};
		\node (n33) at (12.1,3)  {3};
		\node (n43) at (13,2.1)  {4};
		\node (n53) at (13,0.9)  {5};
		\node (n63) at (12.1,0)  {6};
		\node (n73) at (10.9,0)  {7};
		\node (n83) at (10,0.9)  {8};
		\foreach \from/\to in {n53/n63,n23/n53,n43/n53,n53/n73,n13/n63,n33/n63,n63/n83}
		\draw[line width=0.5mm,green]  (\from) -- (\to);	
		
		\node (n14) at (15,2.1) {1};
		\node (n24) at (15.9,3)  {2};
		\node (n34) at (17.1,3)  {3};
		\node (n44) at (18,2.1)  {4};
		\node (n54) at (18,0.9)  {5};
		\node (n64) at (17.1,0)  {6};
		\node (n74) at (15.9,0)  {7};
		\node (n84) at (15,0.9)  {8};
		\foreach \from/\to in {n74/n84,n74/n24,n44/n74,n64/n74,n14/n84,n34/n84,n54/n84}
		\draw[line width=0.5mm,blue]  (\from) -- (\to);
		
	\end{tikzpicture}
	\caption{A homogeneous cycle-free edge $4$-partition of the complete graph $K_8$.} \label{figd4}
\end{figure}
Figure \ref{figd4} gives an example of an edge $4$-partition, i.e. $d=4$. Further, each of the subgraphs are trees with $7$ edges, so the edge $4$-partition is homogeneous and cycle-free.

Another important preliminary concerning edge $d$-partitions of the complete graph $K_{2d}$. The following result is from \cite{edge}.
\begin{lemma}[\cite{edge}]\label{invol}
    Let $(\Gamma_1,\Gamma_2,\ldots,\Gamma_d)$ be a homogeneous cycle-free edge $d$-partition of $K_{2d}$ and $1\leq x<y<z\leq 2d$. Then, there exists a unique homogeneous cycle-free edge $d$-partition $(\Lambda_1,\Lambda_2,\ldots,\Lambda_d)$ of $K_{2d}$ such that $(\Gamma_1,\Gamma_2,\ldots,\Gamma_d)$ and $(\Lambda_1,\Lambda_2,\ldots,\Lambda_d)$ agree on all their edges, except on the triangle $(x,y,z)$.
\end{lemma}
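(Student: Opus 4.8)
The plan is to first translate the hypothesis into the language of spanning trees. Since the partition is homogeneous and $K_{2d}$ has $\binom{2d}{2}=d(2d-1)$ edges, each $\Gamma_i$ carries exactly $2d-1$ edges on the $2d$ vertices of $V(\Gamma_i)=V(K_{2d})$; being cycle-free, each $\Gamma_i$ is therefore a spanning tree. So a homogeneous cycle-free edge $d$-partition of $K_{2d}$ is exactly a decomposition of $K_{2d}$ into $d$ spanning trees, and I will work with this reformulation throughout.

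Next I would isolate the three triangle edges $\{x,y\}$, $\{x,z\}$, $\{y,z\}$ and record two constraints. First, no tree can contain all three (that would create the cycle $xyz$), so if $k_i$ denotes the number of triangle edges in $\Gamma_i$ then $k_i\in\{0,1,2\}$ and $\sum_i k_i=3$. Second --- and this is the structural observation that controls everything --- any partition $(\Lambda_i)$ that agrees with $(\Gamma_i)$ off the triangle is obtained by leaving every non-triangle edge fixed and redistributing the three triangle edges among the trees; counting edges shows each resulting $\Lambda_i$ has $(2d-1)-k_i+k_i'$ edges, where $k_i'$ is the number of triangle edges it receives, so it can be a spanning tree only if $k_i'=k_i$. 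Hence the per-tree triangle count is forced to be preserved, and the only two profiles to analyze are $(2,1)$ (one tree with two triangle edges, one with a single one) and $(1,1,1)$ (three trees with one triangle edge each); every tree with $k_i=0$ is left completely untouched and remains a spanning tree automatically.

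The heart of the argument is a reduction to the three vertices $\{x,y,z\}$. For each tree let $\Gamma_i^\circ$ be $\Gamma_i$ with its triangle edges deleted, and let $\pi_i$ be the partition of $\{x,y,z\}$ according to which of them lie in the same component of $\Gamma_i^\circ$. I would show that the components of $\Gamma_i^\circ$ are in bijection with the blocks of $\pi_i$, so that a set $S_i$ of triangle edges rebuilds a spanning tree when added to $\Gamma_i^\circ$ if and only if $S_i$ is a spanning tree of the block graph of $\pi_i$. A short case check on $k_i$ then pins down $\pi_i$: if $k_i=0$ then $\pi_i$ is the single block $\{xyz\}$ and this tree receives nothing; if $k_i=1$ with triangle edge $e$, the unique-path property of trees forces $\pi_i$ to separate the two endpoints of $e$, giving a two-block partition with exactly two ``crossing'' triangle edges (one of them $e$ itself); and if $k_i=2$ then $\pi_i=\{x\mid y\mid z\}$, for which any two of the three triangle edges form a spanning tree. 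This converts the whole question into a purely combinatorial count of admissible assignments of the triangle edges to the relevant trees.

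With this in hand the two profiles are quick. In the $(2,1)$ profile the tree with $k_i=2$ accepts any two of the three edges, so admissibility is governed solely by the $k_i=1$ tree, which has exactly two crossing edges; choosing which of them it keeps yields exactly two admissible partitions, the original and one other. The $(1,1,1)$ profile is where I expect the real work: each of the three trees must receive exactly one edge, each accepts exactly two of the three (its own plus one alternate), and I would encode ``alternate admissible assignments'' via the fixed-point-free map $\varphi$ sending each tree to the tree owning its alternate edge. An alternative partition corresponds precisely to a nonempty subset of trees on which $\varphi$ restricts to a bijection, equivalently to a cycle of $\varphi$; since a fixed-point-free self-map of a three-element set has exactly one cycle in its functional graph, there is exactly one alternative, which is the desired unique $(\Lambda_i)$. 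The main obstacle is making the $(1,1,1)$ uniqueness airtight --- in particular verifying that the $\varphi$-cycle really yields a \emph{bijective} reassignment of all three edges and that no second alternative can appear --- together with the component-to-block bijection underlying the reduction, which is the point where the tree structure must be used carefully.
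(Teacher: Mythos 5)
Your argument is correct and complete. Be aware, though, that the paper you are working from does not prove this lemma at all: it is imported verbatim from the reference \cite{edge}, so there is no in-paper proof to compare against. Your reduction --- homogeneity plus cycle-freeness forces each $\Gamma_i$ to be a spanning tree, the edge count forces each $\Lambda_i$ to receive the same number $k_i$ of triangle edges as $\Gamma_i$, and deleting the triangle edges from $\Gamma_i$ leaves a forest with $k_i+1$ components each containing at least one of $x,y,z$ --- is exactly the right skeleton, and it matches the spirit of the case analysis in \cite{edge} (profiles $(2,1)$ and $(1,1,1)$, with acceptability of a triangle edge governed by which components of $\Gamma_i^\circ$ it reconnects). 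The one place you flag as a risk, the $(1,1,1)$ uniqueness, does go through: a set $S$ of trees switching to their alternates yields a valid reassignment iff $S$ is nonempty, closed under your map $\varphi$, and $\varphi|_S$ is injective, hence $S$ is a union of cycles of $\varphi$; a fixed-point-free self-map of a $3$-element set has cycles of length $\geq 2$ and so exactly one cycle, giving exactly one alternative. Packaging that case via the functional graph of $\varphi$ is your own organizational device rather than anything in the source, and it is arguably cleaner than an explicit enumeration of subcases; the only thing you should still write out explicitly is the half-line verification that the component-to-block correspondence is a genuine bijection (every component of $\Gamma_i^\circ$ contains an endpoint of a deleted triangle edge), which you have correctly identified as the point where the tree structure is used.
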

\begin{remark}
    For $1\leq x<y<z\leq 2d$ and a homogeneous cycle-free edge $d$-partition of $K_{2d}$ given by $\Gamma=(\Gamma_1,\Gamma_2,\ldots,\Gamma_d)$, we will denote the unique edge $d$-partition given in Lemma \ref{invol} as $\Gamma^{(x,y,z)}$. This edge $d$-partition is known as the involution of $\Gamma$.
\end{remark}

There is a special graph which is of interest for this paper.
\begin{definition}
    Fix $1\leq b\leq d$. For the vertices $1,2,\ldots,2d$ define the sets of edges
    \[L_d(2b-1,2b)=\{(2a,2b-1)|1\leq a<b\}\cup\{(2b-1,2c-1)|b<c\leq d\},\]
    \[C_d(2b-1,2b)=\{(2b-1,2b)\},\]
    and
    \[R_d(2b-1,2b)=\{(2a-1,2b)|1\leq a<b\}\cup\{(2b,2c)|b<c\leq d\}.\]
    Let the twin star graph $TS_d(2b-1,2b)$ be the graph with vertices $1,2,\ldots,2d$ and edges
    \[E(TS_d)=L_d(2b-1,2b)\cup C_d(2b-1,2b)\cup R_d(2b-1,2b).\]
    The edges in $L_d(2b-1,2b)$ are called the left legs of the graph and the edges in $R_d(2b-1,2b)$ are called the right legs of the graph.
\end{definition}
\begin{example}
    As an example, consider the following graph $TS_7(5,6)$. Here there are 14 vertices, with 6 left legs and 6 right legs. For example, the edges $(2,5)$ and $(5,13)$ are left legs, whereas the edges $(3,6)$ and $(6,10)$ are right legs. The graph is given in Figure \ref{TwinStarGraph}.
    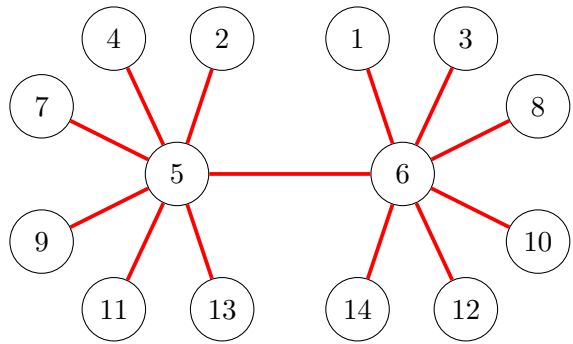
\begin{figure}[h]
\centering
\begin{tikzpicture}
  [scale=0.6,auto=left]
	\node[shape=circle,draw=black,minimum size = 24pt,inner sep=0.3pt] (n1) at (3,0) {$5$};
	\node[shape=circle,draw=black,minimum size = 24pt,inner sep=0.3pt] (n2) at (8,0) {$6$};
	\node[shape=circle,draw=black,minimum size = 24pt,inner sep=0.3pt] (n3) at (7,3) {$1$};
  \node[shape=circle,draw=black,minimum size = 24pt,inner sep=0.3pt] (n4) at (4,3) {$2$};
	\node[shape=circle,draw=black,minimum size = 24pt,inner sep=0.3pt] (n5) at (9.4,3) {$3$};
  \node[shape=circle,draw=black,minimum size = 24pt,inner sep=0.3pt] (n6) at (1.6,3) {$4$};
	\node[shape=circle,draw=black,minimum size = 24pt,inner sep=0.3pt] (n51) at (11,1.5) {$8$};
  \node[shape=circle,draw=black,minimum size = 24pt,inner sep=0.3pt] (n61) at (0,1.5) {$7$};
	\node[shape=circle,draw=black,minimum size = 24pt,inner sep=0.3pt] (n71) at (11,-1.5) {$10$};
  \node[shape=circle,draw=black,minimum size = 24pt,inner sep=0.3pt] (n81) at (0,-1.5) {$9$};
	\node[shape=circle,draw=black,minimum size = 24pt,inner sep=0.3pt] (n7) at (9.4,-3) {$12$};
  \node[shape=circle,draw=black,minimum size = 24pt,inner sep=0.3pt] (n8) at (1.6,-3) {$11$};
	\node[shape=circle,draw=black,minimum size = 24pt,inner sep=0.3pt] (n9) at (7,-3) {$14$};
  \node[shape=circle,draw=black,minimum size = 24pt,inner sep=0.3pt] (n10) at (4,-3) {$13$};

	  \draw[line width=0.5mm,red]  (n1) -- (n2)  ;
		\draw[line width=0.5mm,red]  (n1) -- (n4)  ;
		\draw[line width=0.5mm,red]  (n1) -- (n6)  ;
		\draw[line width=0.5mm,red]  (n1) -- (n61)  ;
		\draw[line width=0.5mm,red]  (n1) -- (n8)  ;
		\draw[line width=0.5mm,red]  (n1) -- (n81)  ;
		\draw[line width=0.5mm,red]  (n1) -- (n10)  ;
		\draw[line width=0.5mm,red]  (n3) -- (n2)  ;
	  \draw[line width=0.5mm,red]  (n5) -- (n2)  ;
		\draw[line width=0.5mm,red]  (n51) -- (n2)  ;
	  \draw[line width=0.5mm,red]  (n7) -- (n2)  ;
	  \draw[line width=0.5mm,red]  (n71) -- (n2)  ;
		\draw[line width=0.5mm,red]  (n9) -- (n2)  ;
\end{tikzpicture}
\caption{The twin star graph $TS_7(5,6)$} \label{TwinStarGraph}
\end{figure}
\end{example}
\begin{definition}
    For $TS_d(2b-1,2b)$, take the set $$LL=\{x|(x,2b-1)\in L_d(2b-1,2b) \textrm{ or }(2b-1,x)\in L_d(2b-1,2b)\}$$ and $$RL=\{x|(x,2b)\in L_d(2b-1,2b) \textrm{ or }(2b,x)\in L_d(2b-1,2b)\}.$$ Notice that $LL$ and $RL$ are finite subsets of the natural numbers and there is a natural order on them.

    Given a left leg $(x,2b-1)$ or $(2b-1,x)$ of $TS_d(2b-1,2b)$, we say that the leg number is the element number of $x$ after $LL$ is enumerated by the natural order on subsets of the natural numbers. Similarly, given a right leg $(x,2b)$ or $(2b,x)$, we say that the leg number is the element number of $x$ after $RL$ is enumerated by the order. By default, let the leg number of $(2b-1,2b)$ be 1. 

    Alternatively, $$LL=\{x|x<2b-1\textrm{ and }x\textrm{ is even}\}\cup\{y|2b-1<y\textrm{ and }y\textrm{ is odd}\}.$$
    Also,
    \[RL=\{x|x<2b-1<2b\textrm{ and }x\textrm{ is odd}\}\cup\{y|2b-1<y\textrm{ and }y\textrm{ is even}\}.\]
\end{definition}
Given $(x,y)$, we will denote the leg number of $(x,y)$ as $l(x,y)$.
\begin{example}
    Take the Twin Star Graph given in Figure \ref{TwinStarGraph}. Notice that the left and right legs are in order, going from the top center to the bottom center along the outside. So, we have
    \[LL=\{2,4,7,9,11,13\},\]
    and
    \[RL=\{1,3,8,10,12,14\}.\]
    In this case, we see that $l(4,5)=2$, whereas $l(6,8)=3$. We could likewise determine the leg number of all of the other edges of $TS_7(5,6)$. Lastly, note that $l(5,6)=1$, as $C_7(5,6)=\{(5,6)\}$, which is pictured in Figure \ref{TwinStarGraph} as the edge in the center of the graph.
\end{example}
\subsection{Upper Triangular Matrices}
We now recall upper triangular matrices and reformulate in such a way that is useful for our later construction.

Given any $d\times n$ matrix $A$ with columns $c_i$, then we can associate $A$ with the simple tensor $c_1\otimes c_2\otimes\ldots\otimes c_n\in V_d^{\otimes n}$. In particular, if $e_1,e_2,\ldots,e_d$ is a basis for $V_d$, then the $d\times d$ identity matrix $I_d$ is associated with the simple tensor $e_1\otimes e_2\otimes\ldots\otimes e_d$.

\begin{definition}\label{UpperLowerDef}
    Let $A$ be a $d\times d$ matrix with columns $c_i$ and associated simple tensor $c_1\otimes\ldots\otimes c_d$. 
    
    We say that $A$ is an Upper Triangular Matrix if for all $1\leq i\leq d$ there exists $\alpha_j\in k$ for $1\leq j\leq i$ such that $c_i=\displaystyle\sum_{j=1}^{i}\alpha_je_j$.

    We say that $A$ is a Lower Triangular Matrix if for all $1\leq i\leq d$ there exists $\alpha_j\in k$ for $i\leq j\leq d$ such that $c_i=\displaystyle\sum_{j=i}^d\alpha_je_j$.

    If $A$ is Upper Triangular or Lower Triangular, we say that $A$ is a Triangular Matrix.
\end{definition}

To illustrate Definition \ref{UpperLowerDef}, we give a couple examples.
\begin{example}
First, consider the matrix $A=\displaystyle\begin{pmatrix}
    1&2&5\\
    0&6&10\\
    0&0&3
\end{pmatrix}.$ Then, $A$ is associated with the simple tensor $c_1\otimes c_2\otimes c_3$, where 
\[c_1=e_1\]
\[c_2=2e_1+6e_2\]
and
\[c_3=5e_1+10e_2+3e_3.\]
Under Definition \ref{UpperLowerDef}, this is an Upper Triangular Matrix but not a Lower Triangular Matrix.

Next, consider the matrix $B=\displaystyle\begin{pmatrix}
    1&0&0\\
    3&0&0\\
    1&2&5
\end{pmatrix}.$ Then $B$ is associated with the simple tensor $u_1\otimes u_2\otimes u_3$, where 
\[u_1=e_1+3e_2+e_3,\]
\[u_2=2e_3,\]
and
\[u_3=5e_3.\]
Under Definition \ref{UpperLowerDef}, this is not an Upper Triangular matrix, but $B$ is a Lower Triangular Matrix.

Lastly, consider the matrix $C=\displaystyle\begin{pmatrix}
    1&1&0\\
    1&0&0\\
    0&0&1
\end{pmatrix}.$ Then $C$ is associated with the simple tensor $v_1\otimes v_2\otimes v_3$, where 
\[v_1=e_1+e_2,\]
\[v_2=e_1,\]
and
\[v_3=e_3.\]
Under Definition \ref{UpperLowerDef}, this is not a Triangular Matrix (neither an Upper Triangular Matrix nor a Lower Triangular Matrix).
\end{example}

Regarding Triangular Matrices, one common use is LU-Decomposition, which is summarized in the following proposition.

\begin{proposition}[\cite{lu}]\label{LUProp}
    For any $d\times d$ matrix $A$, if all of the leading principal minors are nonzero, then there exists a Unit Lower Triangular Matrix $L$ (that is, $A$ is Lower Triangular and $a_{i,i}=1$ for $1\leq i\leq d$) and a non-singular Upper Triangular Matrix $U$ such that $A=LU$.
\end{proposition}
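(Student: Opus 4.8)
The plan is to argue by induction on $d$, building $L$ and $U$ one border row-and-column at a time. The statement is the classical existence result for LU-decomposition, and a block-partition induction makes transparent exactly where the hypothesis on the leading principal minors is used. For the base case $d=1$ I would take $L=(1)$ and $U=(a_{1,1})$; the single leading principal minor $a_{1,1}$ is nonzero by hypothesis, so $U$ is nonsingular, $L$ is unit lower triangular, and $A=LU$ trivially.

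For the inductive step, partition
$$A=\begin{pmatrix} A' & b \\ c^{T} & a_{d,d}\end{pmatrix},$$
where $A'$ is the $(d-1)\times(d-1)$ top-left block, $b$ and $c$ are column vectors of length $d-1$, and $a_{d,d}\in k$. The first $d-1$ leading principal minors of $A'$ coincide with those of $A$, hence are nonzero, so the inductive hypothesis yields a unit lower triangular $L'$ and a nonsingular upper triangular $U'$ with $A'=L'U'$. I then seek
$$L=\begin{pmatrix} L' & 0 \\ \ell^{T} & 1\end{pmatrix},\qquad U=\begin{pmatrix} U' & u \\ 0 & \mu\end{pmatrix}.$$

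Expanding the product block-wise, the off-diagonal blocks force $L'u=b$ and $\ell^{T}U'=c^{T}$. Since $L'$ is unit lower triangular (hence of determinant $1$ and invertible) and $U'$ is nonsingular by hypothesis, these equations determine $u=(L')^{-1}b$ and $\ell^{T}=c^{T}(U')^{-1}$ uniquely, and the remaining scalar equation defines the pivot $\mu=a_{d,d}-\ell^{T}u$. By construction $L$ is unit lower triangular, $U$ is upper triangular, and $A=LU$.

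The one point requiring care — the main, if modest, obstacle — is verifying that the constructed $U$ is genuinely nonsingular, i.e. that $\mu\neq 0$. This is precisely where the top-level hypothesis enters. Because $L$ is unit lower triangular we have $\det L=1$, so $\det A=\det L\cdot\det U=\det U=\det(U')\,\mu$, the last equality since $U$ is block upper triangular. The determinant $\det A$ is the $d$-th leading principal minor, which is nonzero by assumption, and $\det U'\neq 0$ since $U'$ is nonsingular; therefore $\mu\neq 0$ and $U$ is nonsingular, completing the induction.
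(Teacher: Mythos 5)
Your proof is correct: the bordered-block induction is the standard existence argument for LU-decomposition, the identities $L'u=b$, $\ell^{T}U'=c^{T}$, $\mu=a_{d,d}-\ell^{T}u$ are exactly what the block multiplication forces, and you correctly isolate the only delicate point, namely $\mu\neq0$, and settle it via $\det A=\det(U')\,\mu$ together with the hypothesis on the $d$-th leading principal minor. There is nothing in the paper to compare against: Proposition~\ref{LUProp} is quoted from \cite{lu} without proof, so your argument supplies a self-contained justification of a result the paper simply imports. One cosmetic remark: where you say $U'$ is ``nonsingular by hypothesis,'' you mean by the inductive hypothesis; and it is worth noting explicitly that the argument uses only that $k$ is a field, so it applies verbatim in the paper's setting.
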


Lastly, we reformulate a definition for the diagonal of a matrix. 
\begin{definition}\label{DiagDef}
    Let $A$ be a $d\times d$ matrix with entries $a_{i,j}$ and $I_d$ be the identity matrix with entries $e^i_j$, where $e_j=(e^1_j,e^2_j,\ldots,e^d_j)$.
    
    Define the $S^1$-diagonal indicator set is defined $DI_d[1]=\{(i,j)|e_j^i\neq0\}$ and the $S^1$-diagonal as $diag(A)=\{a_{i,j}|(i,j)\in DI_d[1]\}.$
\end{definition}
\begin{remark}
    For a $d\times d$ matrix $A$, notice that $DI_d[1]=\{(1,1),(2,2),(3,3),\ldots,(d,d)\}$ and that $diag(A)$ agrees with the usual definition of the diagonal of a square matrix. We are formulating Definition \ref{DiagDef} in this way in order to give motivation for a later definition.
\end{remark}

\subsection{The Tensor Algebra and the Determinant Map}
Next, we will recall the Tensor Algebra and its connection to the determinant map.
\begin{definition}[\cite{bour}]
    Let $V_d$ be a vector space of dimension $d$. Then, we define the vector space $\mathcal{T}_{V_d}[n]=V_d^{\otimes n}$. The graded vector space $\mathcal{T}_{V_d}=\bigoplus_{n\geq0}\mathcal{T}_{V_d}[n]$, where $\mathcal{T}_{V_d}[0]=k$, is called the Tensor Algebra.
\end{definition}
\begin{remark}
    Notice that since every $d\times d$ matrix is associated with a simple tensor in $V_d^{\otimes d}$, it follows that every $d\times d$ matrix is associated with a simple tensor in $\mathcal{T}_{V_d}[d]$.
\end{remark}
\begin{remark}
    The Tensor Algebra is a $k$-algebra with multiplication defined by concatenation of simple tensors. This structure will not be important for our purposes in this paper. 
\end{remark}

In the context of the Tensor Algebra, we can give a connection to the determinant map.
\begin{proposition}\label{detprop}
    There exists a unique map $det:\mathcal{T}_{V_d}[d]\to k$ such that
    \begin{enumerate}
        \item $det(e_1\otimes e_2\otimes\ldots\otimes e_d)=1$.
        \item If there exists $1\leq x<y\leq d$ such that $v_x=v_y$, then $det(v_1\otimes\ldots\otimes v_d)=0$.
        \item If $c_1\otimes c_2\otimes\ldots\otimes c_d$ is the simple tensor associated with a $d\times d$ matrix $A$, then $det(c_1\otimes\ldots\otimes c_d)=det(A)$, where $det(A)$ is the usual determinant of the matrix $A$.
    \end{enumerate} 
\end{proposition}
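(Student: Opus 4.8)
The plan is to reduce the statement to the universal property of the tensor product together with the classical characterization of the determinant as the unique normalized alternating multilinear form. Here $det$ is read as the $k$-linear functional on $\mathcal{T}_{V_d}[d]=V_d^{\otimes d}$, so that it is determined by its values on simple tensors. By the universal property of $V_d^{\otimes d}$, giving such a linear map is equivalent to giving a $d$-linear map $\phi:V_d\times\cdots\times V_d\to k$ through $\phi(v_1,\ldots,v_d)=det(v_1\otimes\cdots\otimes v_d)$. Under this dictionary, condition (2) says exactly that $\phi$ is \emph{alternating}, i.e.\ it vanishes whenever two of its arguments coincide, and condition (1) normalizes it by $\phi(e_1,\ldots,e_d)=1$.

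First I would settle uniqueness, which in fact follows from (1) and (2) alone. Writing $v_i=\sum_{j=1}^d v_i^j e_j$ and expanding by multilinearity gives
\[
\phi(v_1,\ldots,v_d)=\sum_{(j_1,\ldots,j_d)}\Big(\prod_{i=1}^d v_i^{j_i}\Big)\,\phi(e_{j_1},\ldots,e_{j_d}).
\]
A short polarization, expanding $\phi(\ldots,u+w,\ldots,u+w,\ldots)=0$, shows that an alternating form is antisymmetric under transpositions. Hence every summand whose indices $j_1,\ldots,j_d$ are not pairwise distinct dies, and a summand indexed by a permutation $\sigma$ contributes $\operatorname{sgn}(\sigma)\,\phi(e_1,\ldots,e_d)=\operatorname{sgn}(\sigma)$. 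The sum therefore collapses to the Leibniz formula, so $\phi$, and with it $det$, is forced.

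For existence I would simply \emph{define} $\phi$ by that Leibniz formula --- equivalently, as the ordinary determinant of the matrix with columns $v_1,\ldots,v_d$ --- which is patently multilinear and alternating and hence extends to a linear functional on $\mathcal{T}_{V_d}[d]$ satisfying (1) and (2). Condition (3) then holds by construction, since the tensor factors $c_1,\ldots,c_d$ are precisely the columns of $A$. I do not expect a serious obstacle: the only delicate points are the purely formal identification of the vanishing condition (2) with the alternating property, and the passage from alternating to antisymmetric. The latter is the step one might worry about in small characteristic, but the polarization argument produces the sign change with no hypothesis on $k$, so it is harmless here; everything else is the standard uniqueness-of-the-determinant argument.
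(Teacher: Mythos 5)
Your argument is correct and is exactly the classical characterization of the determinant as the unique normalized alternating multilinear form; the paper itself states this proposition as a recalled standard fact and supplies no proof, so there is nothing to diverge from. Your one genuinely careful point --- that alternating implies antisymmetric via polarization with no hypothesis on the characteristic, so that uniqueness follows from (1) and (2) alone and (3) is then verified for the map so forced --- is handled properly.
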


Regarding the determinant function, recall the following result.
\begin{proposition}\label{detprop2}
    Let $A$ be an Upper Triangular Matrix. Then, $det(A)$ is precisely the product of the diagonal entries.
\end{proposition}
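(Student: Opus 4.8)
The plan is to exploit the multilinearity of $det$ together with the vanishing property (2) and the normalization (1) of Proposition \ref{detprop}. Since by property (3) the map $det$ agrees with the usual determinant, which is linear in each column, the induced map on $\mathcal{T}_{V_d}[d]=V_d^{\otimes d}$ is linear; equivalently, $det(v_1\otimes\ldots\otimes v_d)$ is multilinear in the factors $v_1,\ldots,v_d$. I would first record this observation explicitly, since it is the engine of the whole argument.

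Next, I would write the columns of the upper triangular matrix $A$ as $c_i=\sum_{j=1}^i a_{j,i}\,e_j$ for $1\leq i\leq d$, where $a_{i,i}$ is the diagonal entry of column $i$ and the triangularity of Definition \ref{UpperLowerDef} is exactly the constraint that only basis vectors $e_j$ with $j\leq i$ appear. Expanding by multilinearity then gives
\[
det(A)=det(c_1\otimes\ldots\otimes c_d)=\sum a_{j_1,1}a_{j_2,2}\cdots a_{j_d,d}\;det(e_{j_1}\otimes e_{j_2}\otimes\ldots\otimes e_{j_d}),
\]
where the sum ranges over all index tuples $(j_1,\ldots,j_d)$ subject to $1\leq j_i\leq i$ for each $i$.

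The key combinatorial step is to determine which terms survive. By property (2), $det(e_{j_1}\otimes\ldots\otimes e_{j_d})=0$ whenever two of the indices coincide, so only tuples in which $j_1,\ldots,j_d$ are pairwise distinct, i.e.\ permutations of $\{1,\ldots,d\}$, can contribute. I would then argue that the constraint $j_i\leq i$ forces such a permutation to be the identity: $j_1\leq 1$ gives $j_1=1$; since $j_2\leq 2$ and $j_2\neq j_1$, we get $j_2=2$; and inductively $j_i=i$ for all $i$. Hence exactly one term survives, namely $a_{1,1}a_{2,2}\cdots a_{d,d}\,det(e_1\otimes\ldots\otimes e_d)$, which by the normalization (1) equals the product of the diagonal entries.

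I expect the only genuinely delicate point to be the first one: confirming that multilinearity is legitimately available from the hypotheses as stated. This is immediate once one notes that $det$ is a linear map on the tensor space, equivalently that the usual determinant is column-multilinear via property (3); after that the remainder is bookkeeping. An alternative route, should one wish to avoid invoking multilinearity directly, is induction on $d$ via cofactor expansion along the first column, whose only possibly nonzero entry is $a_{1,1}$, reducing to the $(d-1)\times(d-1)$ upper triangular block. The multilinear expansion is cleaner, however, and fits the tensor formalism of this paper more naturally, which is why I would adopt it.
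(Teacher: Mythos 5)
Your proof is correct. The paper itself offers no proof of Proposition \ref{detprop2} --- it is simply recalled as a classical fact --- so there is nothing to compare against; your argument (expand each column $c_i=\sum_{j\leq i}a_{j,i}e_j$ by multilinearity, kill repeated-index terms via property (2) of Proposition \ref{detprop}, observe that the constraint $j_i\leq i$ on a permutation forces the identity, and normalize via property (1)) is the standard one and fits the tensor formalism of the paper naturally. The one point you flag as delicate, the availability of multilinearity, is indeed fine: $det$ is a linear map on $V_d^{\otimes d}$ and the tensor product is itself multilinear in its factors, so $det(v_1\otimes\cdots\otimes v_d)$ is multilinear in $v_1,\ldots,v_d$ without any appeal to property (3).
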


\subsection{The Vector Space $\mathcal{T}^{S^2}[2d]$}
Next, we recall a construction from \cite{sta2}. This construction parallels the Tensor Algebra previously discussed (see \cite{SDiss} for more information).

\begin{definition}[\cite{sta2}]
    Let $V_d$ be a vector space of dimension $d$. Define the vector space $\mathcal{T}^{S^2}_{V_d}[n]=V_d^{\otimes\binom{n}{2}}$ and the graded vector space $\mathcal{T}^{S^2}=\displaystyle\bigoplus_{n\geq0}\mathcal{T}^{S^2}_{V_d}[n]$, where $\mathcal{T}^{S^2}_{V_d}[0]=\mathcal{T}^{S^2}_{V_d}[1]=k$.
\end{definition}
\begin{remark}
    Simple tensors in $\mathcal{T}^{S^2}_{V_d}[n]$ are of the form $(\otimes_{1\leq i<j\leq n}(v_{i,j}))$. In particular, given a simple tensor $\omega=(\otimes_{1\leq i<j\leq n}c_{i,j})$, we can associate a $d\times \binom{n}{2}$ matrix $A_\omega$ by the matrix with columns $c_{i,j}$ ordered by the dictionary order. We will refer to $A_\omega$ as the matrix form of $\omega$ and refer to $\omega$ as the simple tensor associated to $A_\omega$.

    By way of example, let $\omega=(\otimes_{1\leq i<j\leq 4}(v_{i,j}))$, where $$v_{i,j}=\begin{cases}
        e_1, \ \ \ i+j \ even\\
        e_1+e_2, \ \ \ i+j \ odd
    \end{cases}.$$ 
    
    If $A$ is the matrix form of $\omega$, the columns of the matrix $A$ are of the form $(i,j)$ for $1\leq i<j\leq 4$ under the dictionary order (i.e. the columns are labelled by $(1,2), (1,3), (1,4), (2,3), (2,4),$ and $(3,4)$, respectively). In this case, 
    \begin{align*}
        A&=(e_1+e_2)\otimes e_1\otimes (e_1+e_2)\otimes (e_1+e_2)\otimes e_1\otimes (e_1+e_2)\\
        &\\
        &=\begin{array}{cccccccc}
&(1,2)&(1,3)&(1,4)&(2,3)&(2,4)&(3,4)& \\
\ldelim({2}{4mm}&1&1&1&1&1&1&\rdelim){2}{4mm}\\
    &1&0&1&1&0&1&
\end{array}\\
    \end{align*}
\end{remark}

Now, there is the following fact about $\mathcal{T}^{S^2}_{V_d}[n]$.
\begin{proposition}[\cite{edge}]\label{BasisGraph}
    Fix a basis $\mathcal{B}_d=\{e_1,\ldots,e_d\}$ for $V_d$ and define
    \[\mathcal{G}^{S^2}_{\mathcal{B}_d}[n]=\{\otimes_{1\leq i<j\leq n}(v_{i,j})|v_{i,j}\in\mathcal{B}_d\}.\]
    Then, $\mathcal{G}^{S^2}_{\mathcal{B}_d}[n]$ is a system of generators for $\mathcal{T}^{S^2}_{V_d}[n]$. Further, there exists a bijection between $\mathcal{G}^{S^2}_{\mathcal{B}_d}[n]$ and $\mathcal{P}_d(K_n)$, the collection of edge $d$-partitions of the complete graph on $n$ vertices.
\end{proposition}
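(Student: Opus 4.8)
The plan is to prove the two assertions separately, observing that each reduces to identifying the combinatorial data underlying a simple tensor of basis vectors. For the spanning statement, I would invoke the standard multilinear-algebra fact that if $\mathcal{B}_d$ is a basis of $V_d$, then the simple tensors $e_{i_1}\otimes\cdots\otimes e_{i_m}$ with each $i_k\in\{1,\ldots,d\}$ form a basis of $V_d^{\otimes m}$. Taking $m=\binom{n}{2}$ and indexing the tensor factors by the pairs $(i,j)$ with $1\le i<j\le n$ under the dictionary order, the collection of all such basis tensors is exactly $\mathcal{G}^{S^2}_{\mathcal{B}_d}[n]$. In particular it spans $\mathcal{T}^{S^2}_{V_d}[n]=V_d^{\otimes\binom{n}{2}}$, so it is a system of generators (indeed a basis), which also records the linear independence I will reuse below.

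For the bijection, the key observation is that the set of pairs $\{(i,j):1\le i<j\le n\}$ indexing the tensor factors is precisely the edge set $E(K_n)$ of the complete graph on $n$ vertices. Thus a generator $\otimes_{1\le i<j\le n}(v_{i,j})$ with each $v_{i,j}\in\mathcal{B}_d$ is the same datum as a function $c:E(K_n)\to\{1,\ldots,d\}$ given by $v_{i,j}=e_{c(i,j)}$, that is, a coloring of the edges of $K_n$ by $d$ colors. I would then define a map $\Phi$ sending such a generator to the tuple $(\Gamma_1,\ldots,\Gamma_d)$, where $\Gamma_\ell$ is the subgraph with $V(\Gamma_\ell)=V(K_n)$ and $E(\Gamma_\ell)=\{(i,j)\in E(K_n):c(i,j)=\ell\}$. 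Verifying that $(\Gamma_1,\ldots,\Gamma_d)\in\mathcal{P}_d(K_n)$ is then immediate from the three defining conditions: each $\Gamma_\ell$ has full vertex set by construction; the edge sets are pairwise disjoint because $c$ assigns each edge a single value; and their union is $E(K_n)$ because $c$ is defined on every edge.

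Next I would exhibit the inverse. Given $(\Gamma_1,\ldots,\Gamma_d)\in\mathcal{P}_d(K_n)$, conditions (2) and (3) in the definition of an edge $d$-partition guarantee that every edge $(i,j)$ lies in exactly one $\Gamma_\ell$, so setting $c(i,j)=\ell$ and $v_{i,j}=e_{c(i,j)}$ recovers a well-defined generator; the two assignments are then visibly mutually inverse. The one point deserving genuine care, and the main (if modest) obstacle, is that $\Phi$ is injective at the level of tensors: two generators determining distinct colorings must be distinct elements of $\mathcal{T}^{S^2}_{V_d}[n]$, rather than accidentally collapsing to the same simple tensor. This is exactly the linear independence of the standard basis tensors established in the first part, and granting it, $\Phi$ and its constructed inverse are both well defined, so $\mathcal{G}^{S^2}_{\mathcal{B}_d}[n]$ and $\mathcal{P}_d(K_n)$ are in bijection.
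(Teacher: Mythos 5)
Your proposal is correct and follows essentially the same route as the paper, which likewise defines $\Gamma_\ell$ by $E(\Gamma_\ell)=\{(x,y)\mid v_{x,y}=e_\ell\}$ and observes that this assignment is the desired bijection, with the spanning claim coming from the standard basis of $V_d^{\otimes\binom{n}{2}}$. Your treatment is simply a more detailed write-up of the paper's one-paragraph ``direct construction.''
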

Proposition \ref{BasisGraph} follows by direct construction. Given $\otimes_{1\leq i<j\leq n}(v_{i,j})\in\mathcal{G}^{S^2}_{\mathcal{B}_d}[n]$, for $1\leq i\leq d$ we define $\Gamma_i$ to be the graph with vertices $1,2,3,\ldots,n$ and edges 
\[E(\Gamma_i)=\{(x,y)|v_{x,y}=e_i\}.\]
This defines an edge $d$-partition and this process defines a bijection.

Next, there is a special element of $\mathcal{T}^{S^2}_{V_d}[2d]$ that we will be relying on in this paper, which generalizes the construction of the simple tensor associated to the identity matrix.
\begin{definition}[\cite{dets2}]
    For $1\leq a\leq d$, let $S_a=\{2a-1,2a\}$. 

    Then, for $1\leq i<j\leq 2d$, we define $e_{i,j}$ by
    \[e_{i,j}=\begin{cases}
        e_a, \ \ \ \ \textrm{$i+j$ is even and $i\in S_a$}\\
        e_b, \ \ \ \ \textrm{$i+j$ is odd and $j\in S_b$}
    \end{cases}.\]
    Finally, we define $E^{(2)}_d\in\mathcal{T}^{S^2}_{V_d}[2d]$ by $E^{(2)}_d=\otimes_{1\leq i<j\leq 2d}(e_{i,j})$.
\end{definition}
\begin{remark}
    Notice that the edge $d$-partition associated with $E^{(2)}_d$ consists only of twin star graphs. More specifically, the edge $d$-partition is $(TS_d(1,2),TS_d(3,4),\ldots,TS_d(2d-1,2d))$.
\end{remark}
\begin{remark}
    In \cite{edge}, a different distinguished element of $\mathcal{T}^{S^2}_{V_d}[2d]$ was given for $d\geq2$. However, the element $E^{(2)}_d$ fits more naturally into a further generalization, so we will be using it for the terms defined in this paper. For more information on this further generalization $E^{(r)}_d$, see \cite{SDiss}.
\end{remark}

\subsection{The Map $det^{S^2}$}
\subsubsection{General Results}
Next, we will consider a map $det^{S^2}:\mathcal{T}^{S^2}_{V_d}[2d]\to k$. This map has been studied in several instances, including \cite{SDiss}, \cite{edge}, \cite{dets2}, \cite{sta2} and \cite{sv}. For our purposes, the following theorem summarizes the relevant information.

\begin{proposition}[\cite{edge},\cite{dets2},\cite{sta2}]\label{dets2thm}
There exists a map $det^{S^2}:\mathcal{T}^{S^2}_{V_d}[2d]\to k$ which is linear, nontrivial, and has the property that $det^{S^2}(\otimes_{1\leq i<j\leq 2d}(v_{i,j}))=0$ if there exists $1\leq x<y<z\leq 2d$ such that $v_{x,y}=v_{x,z}=v_{y,z}$. Further, if $d=2$ or $d=3$, the map $det^{S^2}$ is unique, up to a constant.
\end{proposition}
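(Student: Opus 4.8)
The plan is to treat $det^{S^2}$ as a multilinear form and reduce both existence and uniqueness to a single linear-algebra problem. Since $\mathcal{T}^{S^2}_{V_d}[2d] = V_d^{\otimes d(2d-1)}$, giving a linear map $det^{S^2}$ is the same as giving a multilinear form $F$ in $d(2d-1)$ vector arguments, one argument $v_{i,j}$ for each edge $(i,j)$ of $K_{2d}$. By Proposition \ref{BasisGraph}, the basis generators $\mathcal{G}^{S^2}_{\mathcal{B}_d}[2d]$ are indexed by edge $d$-partitions $\Gamma \in \mathcal{P}_d(K_{2d})$, so $F$ is completely determined by the scalars $c_\Gamma := F(\Gamma)$ it assigns to each such partition. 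The first step is therefore to rewrite the defining vanishing condition as a system of linear relations among the $c_\Gamma$.

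First I would polarize the triangle condition. Fixing a triangle $(x,y,z)$ and all arguments except $v_{x,y}, v_{x,z}, v_{y,z}$, the form $F$ restricts to a trilinear map $G$ with $G(v,v,v) = 0$ for all $v$; because $\mathrm{char}\, k \neq 2, 3$, the polarization identity shows this is equivalent to the vanishing of the symmetrization of $G$ over its three slots. Translated to the basis, this says: for every triangle $(x,y,z)$, every coloring of the remaining edges, and every ordered triple of colors on the triangle edges, the sum of $c_\Gamma$ over the six reassignments of that triple to the three triangle slots is zero. In particular, taking all three colors equal forces $6\, c_\Gamma = 0$, so every partition containing a monochromatic triangle --- equivalently every partition in which some color class contains a $3$-cycle --- receives the value $0$. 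Existence and uniqueness now become, respectively, the nonemptiness and the dimension of the solution space of this explicit homogeneous linear system.

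For existence, I would exhibit a nonzero solution concentrated on the homogeneous cycle-free partitions, i.e. the decompositions of $K_{2d}$ into $d$ spanning trees. Since the twin-star partition associated with $E^{(2)}_d$ is exactly such a decomposition (each $TS_d(2b-1,2b)$ is a spanning tree with $2d-1$ edges), I would normalize $c_{E^{(2)}_d} = 1$, so that the resulting form is nontrivial, and set $c_\Gamma = 0$ on every partition that is not a spanning-tree decomposition. Lemma \ref{invol} supplies, for each triangle $(x,y,z)$, the unique spanning-tree decomposition $\Gamma^{(x,y,z)}$ agreeing with $\Gamma$ off the triangle, and this is precisely the local move along which the symmetrization relations couple the surviving tree-values, the remaining terms of each relation involving non-tree partitions on which the value has been set to zero. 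One then propagates a sign from $E^{(2)}_d$ along these flips and checks that the assignment is well defined and satisfies every relation; this consistency check is the technical heart of the construction and is what is carried out in \cite{sta2}, \cite{edge}, and \cite{dets2}.

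For uniqueness when $d = 2$ or $d = 3$, I would argue that the solution space is at most one-dimensional, which together with existence gives exactly one dimension. The monochromatic-triangle relations already pin several values to $0$, and Lemma \ref{invol} lets each spanning-tree decomposition's value be solved from that of the tree decomposition obtained by flipping a single triangle; propagating along flips, all tree-values become multiples of a single chosen one, say $c_{E^{(2)}_d}$. It then remains to show that every non-tree value is likewise forced, expressible through the remaining relations in terms of the tree-values, so that one scalar determines the whole form. The main obstacle is exactly this rigidity: one must prove that the flip graph on spanning-tree decompositions of $K_{2d}$ is connected and that the relations leave no independent non-tree degree of freedom. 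This is where the hypothesis $d \le 3$ is essential --- for $d = 2, 3$ the combinatorics of these decompositions is tight enough to leave a one-dimensional solution space, whereas for larger $d$ the extra freedom breaks uniqueness, consistent with the proposition claiming uniqueness only in these two cases.
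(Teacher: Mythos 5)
Your reduction of the problem to a homogeneous linear system on the coefficients $c_\Gamma$ indexed by $\mathcal{P}_d(K_{2d})$, via Proposition~\ref{BasisGraph} and polarization of the cubic vanishing condition (legitimate here since $\mathrm{char}\,k\neq 2,3$), is a faithful description of the framework used in the cited sources; note that the paper itself offers no proof of Proposition~\ref{dets2thm} and simply imports it from \cite{sta2}, \cite{edge}, and \cite{dets2}. The genuine gap is in your existence argument. The construction you propose --- support the form on homogeneous cycle-free partitions, normalize $c_{E^{(2)}_d}=1$, and propagate a sign along the involutions of Lemma~\ref{invol} --- is known to produce $det^{S^2}$ only for $d=2,3$ (this is exactly Proposition~\ref{dim34prop}); for $d>3$ the well-definedness of that sign function and the verification of the symmetrization relations is precisely Conjecture~\ref{dets2Conj}, which the paper states is open. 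So your route cannot establish existence for all $d$, which the proposition claims; the actual general-$d$ existence proof in \cite{dets2} proceeds by a different, explicit construction rather than by the partition-sum formula. Relatedly, your sentence ``for larger $d$ the extra freedom breaks uniqueness'' overstates the situation: uniqueness for $d>3$ is not known to fail, it is simply unresolved.

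Two further points deserve attention even in the range $d\le 3$ where your strategy is viable. First, a single symmetrization relation is indexed by a triangle, a coloring of the off-triangle edges, and a \emph{multiset} of three colors on the triangle; Lemma~\ref{invol} produces, for a homogeneous cycle-free $\Gamma$, a unique partner $\Gamma^{(x,y,z)}$ agreeing off the triangle, but that partner may carry a different multiset of triangle colors and hence live in a \emph{different} relation, so the claimed pairwise cancellation inside each relation is not automatic and must be argued. Second, the monochromatic-triangle relations only annihilate partitions containing a $3$-cycle in one color class; partitions with longer monochromatic cycles, or inhomogeneous ones, are not directly killed, so the rigidity step you defer (``no independent non-tree degree of freedom'') is a substantive part of the uniqueness proof, not a routine check. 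As written, both of the load-bearing steps --- consistency of the sign for existence and connectedness-plus-rigidity for uniqueness --- are delegated to the references rather than proved, so the proposal is an accurate roadmap for $d=2,3$ but not a proof of the stated result.
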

\begin{remark}
    There are parallels between Propositions \ref{detprop} and \ref{dets2thm}, even if uniqueness is still an open question for $d>4$ in Proposition \ref{dets2thm}. In \cite{SDiss} and \cite{dets3}, it was shown that the determinant map and $det^{S^2}$ have a further generalization $det^{S^r}:V_d^{\otimes\binom{rd}{r}}\to k$ that satisfy a property generalizing that $det^{S^2}(\otimes_{1\leq i<j\leq 2d}(v_{i,j}))=0$ if there exists $1\leq x<y<z\leq 2d$ such that $v_{x,y}=v_{x,z}=v_{y,z}$. We will not be focusing on $det^{S^r}$ in this paper, although many of the concepts introduced can be extended to this generalization.
\end{remark}
Regarding this map $det^{S^2}:V_d^{\otimes d(2d-1)}\to k$, a construction is given in \cite{dets2} for all $d\geq1$. However, there exists an alternative construction for $d=2$ and $d=3$ which is more useful for our purposes in this paper. Due to the uniqueness, the alternative construction and the general construction presented in \cite{dets2} are equivalent for $d=2$ and $d=3$.

\subsubsection{The Case $d=2$}
First, suppose that $d=2$, so $V_d=V_2$ is a $2$-dimensional vector space. Let $v_{i,j}\in V_2$ for $1\leq i<j\leq 4$, where $v_{i,j}=v_{i,j}^1e_1+v_{i,j}^2e_2$.

Given an edge $2$-partition $(\Gamma_1,\Gamma_2)$, define the monomial
\begin{equation}
    M_{(\Gamma_1,\Gamma_2)}(\otimes_{1\leq i<j\leq 2d}(v_{i,j}))=\prod_{(i_1,j_1)\in E(\Gamma_1)}v_{i_1,j_1}^1\prod_{(i_2,j_2)\in E(\Gamma_2)}v_{i_2,j_2}^2
\end{equation}

Given this, we have the following result.

\begin{proposition}[\cite{edge}]
    Let $(\Gamma_1^{E^{(2)}_2},\Gamma_2^{E^{(2)}_2})$ be the edge $2$-partition associated with $E^{(2)}_2$. For $1\leq x<y<z\leq 2d$, let $(\Gamma_1,\Gamma_2)^{(x,y,z)}$ be the involution of $(\Gamma_1,\Gamma_2)$ given in Lemma \ref{invol}.
    \begin{enumerate}
    \item There exists a map $\varepsilon^{S^2}_2:\mathcal{P}_2^{h,cf}(K_{4})\to\{\pm1\}$ such that $\varepsilon^{S^2}_2((\Gamma_1,\Gamma_2)^{(x,y,z)})=-\varepsilon^{S^2}_2(\Gamma_1,\Gamma_2)$, $\varepsilon^{S^2}_2$ is unique, and $\varepsilon^{S^2}_2(\Gamma_1^{E^{(2)}_2},\Gamma_2^{E^{(2)}_2})=-1$.
    \item The map determined by
    \begin{equation}\label{dets2d2}
    det^{S^2}(\otimes_{1\leq i<j\leq 2d}v_{i,j})=\sum_{(\Gamma_1,\Gamma_2)\in \mathcal{P}^{h,cf}_2(K_{4})} \varepsilon_2^{S^2}((\Gamma_1,\Gamma_2))M_{(\Gamma_1,\Gamma_2)}((v_{i,j})_{1\leq i<j\leq 4}),
    \end{equation}
    is linear, $det^{S^2}$ is unique up to a constant, and has the property that $det^{S^2}(\otimes_{1\leq i<j\leq 4}v_{i,j})=0$ if there exists $1\leq x<y<z\leq 4$ such that $v_{x,y}=v_{x,z}=v_{y,z}$. 
    \end{enumerate}
\end{proposition}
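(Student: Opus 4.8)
The plan is to prove the two parts in order, basing the analysis on a concrete description of $\mathcal{P}^{h,cf}_2(K_4)$. Since each block $\Gamma_i$ is a cycle-free subgraph of $K_4$ with vertex set $\{1,2,3,4\}$ and exactly $|E(K_4)|/2 = 3$ edges, it is a spanning tree, hence a Hamiltonian path on four vertices; as $E(\Gamma_1)\sqcup E(\Gamma_2) = E(K_4)$, an element of $\mathcal{P}^{h,cf}_2(K_4)$ is precisely an ordered pair of complementary Hamiltonian paths. A direct count gives twelve such partitions, falling into six complementary pairs, and $(\Gamma_1^{E^{(2)}_2}, \Gamma_2^{E^{(2)}_2})$ is one of them.

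For existence in Part (1), I would define $\varepsilon^{S^2}_2$ through the action of $S_4\times S_2$ (permuting the four vertices, and swapping the two blocks). After checking that this action is transitive on the twelve partitions, so that every $\Gamma$ can be written as $(\sigma,\tau)\cdot E^{(2)}_2$, I would set $\varepsilon^{S^2}_2((\sigma,\tau)\cdot E^{(2)}_2) = -\varepsilon(\sigma)\varepsilon(\tau)$ with $\varepsilon$ the usual sign character. The first thing to verify is well-definedness: every $(\sigma,\tau)$ fixing $E^{(2)}_2$ must satisfy $\varepsilon(\sigma)\varepsilon(\tau)=1$ (reading each block as a path, the only genuine ambiguity is reversal, which is the even vertex permutation $(1\,4)(2\,3)$, so no inconsistency arises). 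The formula then satisfies $\varepsilon^{S^2}_2(g\cdot\Gamma) = \varepsilon(g_1)\varepsilon(g_2)\,\varepsilon^{S^2}_2(\Gamma)$ for $g=(g_1,g_2)$, and $\varepsilon^{S^2}_2(E^{(2)}_2) = -1$ by taking $(\sigma,\tau)=(\mathrm{id},\mathrm{id})$. To obtain the sign-flip identity I would use that the involution is equivariant, namely $(g\cdot\Gamma)^{(x,y,z)} = g\cdot\Gamma^{\,g^{-1}(x,y,z)}$ (both sides are the unique partition meeting the hypotheses of Lemma \ref{invol}), which reduces the identity $\varepsilon^{S^2}_2(\Gamma^{(x,y,z)}) = -\varepsilon^{S^2}_2(\Gamma)$ to the four base cases $\Gamma = E^{(2)}_2$; in each of these one checks directly that $(E^{(2)}_2)^{(x,y,z)} = (\sigma',\tau')\cdot E^{(2)}_2$ with $\varepsilon(\sigma')\varepsilon(\tau') = -1$.

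For uniqueness I would use that the four involutions act transitively, i.e. the graph on the twelve partitions joining $\Gamma$ to each $\Gamma^{(x,y,z)}$ is connected: then $\varepsilon^{S^2}_2(E^{(2)}_2) = -1$ together with the flip relation determines $\varepsilon^{S^2}_2$ everywhere by propagation, and the existence just established guarantees this is consistent. I expect this consistency --- equivalently, that the involution graph is connected and has no odd cycle --- to be the \emph{main obstacle}; I would settle it either from the $S_4\times S_2$ description above or by direct inspection of the twelve-vertex graph.

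Part (2) then follows quickly. Linearity holds because each monomial $M_{(\Gamma_1,\Gamma_2)}$ has degree exactly one in every $v_{i,j}$ --- each of the six edges contributes the single factor $v_{i,j}^1$ or $v_{i,j}^2$ according to its block --- so $M_{(\Gamma_1,\Gamma_2)}$ is multilinear and extends to a linear functional on $\mathcal{T}^{S^2}_{V_2}[4]=V_2^{\otimes 6}$, and the sum of twelve of these is linear. For the vanishing property, fix a triangle $(x,y,z)$ with $v_{x,y}=v_{x,z}=v_{y,z}$ and pair the partitions by the fixed-point-free involution $\Gamma\mapsto\Gamma^{(x,y,z)}$. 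Within a pair the two partitions agree on the three edges outside the triangle (Lemma \ref{invol}), so those factors match; on the triangle, homogeneity forces the number $p$ of triangle edges assigned to $\Gamma_1$ to equal $3$ minus the number of (unchanged) outside edges of $\Gamma_1$, so $p$ is the same for both partitions, and since the three triangle vectors are equal the triangle contributes the identical factor $(v_{x,y}^1)^{p}(v_{x,y}^2)^{3-p}$ to each monomial. Hence $M_\Gamma = M_{\Gamma^{(x,y,z)}}$ while $\varepsilon^{S^2}_2$ flips sign, so each pair contributes $\varepsilon^{S^2}_2(\Gamma)\bigl(M_\Gamma - M_{\Gamma^{(x,y,z)}}\bigr)=0$ and the whole sum vanishes. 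Finally, linearity together with this vanishing property shows the map agrees with the unique (up to a scalar) map of Proposition \ref{dets2thm} for $d=2$, the scalar being pinned down by the normalization $\varepsilon^{S^2}_2(\Gamma_1^{E^{(2)}_2},\Gamma_2^{E^{(2)}_2})=-1$.
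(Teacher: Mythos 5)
This proposition is quoted from \cite{edge}; the paper gives no proof of its own, so there is nothing to compare step by step. Your argument is correct and in fact coincides with the approach of the cited source: defining $\varepsilon^{S^2}_2$ via the $S_4\times S_2$ action by $\varepsilon^{S^2}_2((\sigma,\tau)E^{(2)}_2)=-\varepsilon(\sigma)\varepsilon(\tau)$, checking the flip relation on the four triangles at $E^{(2)}_2$ by equivariance, and cancelling monomials in pairs under the involution. One small point: your well-definedness check should cover the full stabilizer of $E^{(2)}_2$, which besides the path reversal $(1\,2)(3\,4)$ contains two elements pairing a $4$-cycle in $S_4$ with the block swap in $S_2$; these also have sign product $+1$, so the conclusion is unaffected.
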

\begin{remark}
    In \cite{edge}, the map $\varepsilon^{S^2}_2$ was more closely studied, particular in relation to an $S_{4}\times S_2$ action. These facts will not be relevant for our discussion here.
\end{remark}

\subsubsection{The Case $d>2$}
Now, we will use the previous construction to describe a construction for $d=3$, as well as to present a conjecture for $d>3$ proposed in \cite{edge}.

Let $d=3$ and let $v_{i,j}\in V_d$ for $1\leq i<j\leq 2d$, where $v_{i,j}=\sum_{k=1}^dv_{i,j}^ke_k$.

First, given an edge $d$-partition $(\Gamma_1,\Gamma_2,\ldots,\Gamma_d)$, define the monomial
\begin{equation}\label{Mond2d3}
    M_{(\Gamma_1,\Gamma_2,\ldots,\Gamma_d)}(\otimes_{1\leq i<j\leq 2d}v_{i,j})=\prod_{k=1}^d\prod_{(i,j)\in E(\Gamma_k)}v_{i,j}^k.
\end{equation}

Then, we have the following proposition from \cite{edge}.
\begin{proposition}[\cite{edge}]\label{dim34prop}
    Let $d=2$ or $d=3$. Let $(\Gamma_1^{E^{(2)}_d},\Gamma_2^{E^{(2)}_d},\ldots,\Gamma_d^{E^{(2)}_d})$ be the edge $d$-partition associated with $E^{(2)}_d$. For $1\leq x<y<z\leq 2d$, let $(\Gamma_1,\Gamma_2,\ldots,\Gamma_d)^{(x,y,z)}$ be the involution of $(\Gamma_1,\Gamma_2,\ldots,\Gamma_d)$ given in Lemma \ref{invol}. 
    \begin{enumerate}
        \item There exists a unique map $\varepsilon^{S^2}_d:\mathcal{P}_d^{h,cf}(K_{2d})\to\{\pm1\}$ such that $\varepsilon^{S^2}_d((\Gamma_1,\Gamma_2,\ldots,\Gamma_d)^{(x,y,z)})=-\varepsilon^{S^2}_d(\Gamma_1,\Gamma_2,\ldots,\Gamma_d)$, $\varepsilon^{S^2}_d$ is unique, and $\varepsilon^{S^2}_d(\Gamma_1^{E^{(2)}_2},\Gamma_2^{E^{(2)}_d})=(-1)^{d+1}$.
        \item The map determined by 
        \begin{equation}\label{dets2d3d4}
    det^{S^2}(\otimes_{1\leq i<j\leq 2d}v_{i,j})=\sum_{(\Gamma_1,\Gamma_2,\ldots,\Gamma_d)\in \mathcal{P}^{h,cf}_d(K_{2d})} \varepsilon_d^{S^2}((\Gamma_1,\Gamma_2,\ldots,\Gamma_d))M_{(\Gamma_1,\Gamma_2,\ldots,\Gamma_d)}(\otimes_{1\leq i<j\leq 2d}v_{i,j}).
    \end{equation}
    is linear, nontrivial, unique up to a constant, and has the property that $det^{S^2}(\otimes_{1\leq i<j\leq 2d}v_{i,j})=0$ if there exists $1\leq x<y<z\leq 2d$ such that $v_{x,y}=v_{x,z}=v_{y,z}$.
    \end{enumerate}
\end{proposition}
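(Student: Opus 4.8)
The plan is to handle the two assertions in order: part (1) constructs the sign function $\varepsilon^{S^2}_d$, and part (2) then feeds it into \eqref{dets2d3d4} and checks the four listed properties. For part (1) I would first turn Lemma~\ref{invol} into a graph structure. Let $\mathcal{H}_d$ be the graph whose vertex set is $\mathcal{P}_d^{h,cf}(K_{2d})$ and which joins each $\Gamma$ to $\Gamma^{(x,y,z)}$ for every $1\le x<y<z\le 2d$; by Lemma~\ref{invol} these edges are well defined and $\Gamma^{(x,y,z)}\neq\Gamma$. A map with $\varepsilon^{S^2}_d(\Gamma^{(x,y,z)})=-\varepsilon^{S^2}_d(\Gamma)$ is exactly a proper $2$-coloring of $\mathcal{H}_d$, and together with the normalization $\varepsilon^{S^2}_d(E^{(2)}_d)=(-1)^{d+1}$ such a coloring exists and is unique precisely when $\mathcal{H}_d$ is connected and bipartite. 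Connectivity --- that every homogeneous cycle-free partition is reachable from $E^{(2)}_d$ by triangle flips --- gives the uniqueness claim, since the value at $E^{(2)}_d$ then propagates to a forced value everywhere; for $d=2$ this is the transitivity of the $S_4\times S_2$ action used in the preceding proposition, and for $d=3$ it can be checked against the finite family of decompositions of $K_6$ into three spanning trees (each $\Gamma_i$ is spanning, acyclic, and has $5$ edges, hence a spanning tree).

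The existence of $\varepsilon^{S^2}_d$ is equivalent to bipartiteness of $\mathcal{H}_d$, and this consistency check is the main obstacle. The structural fact I would lean on is that homogeneity forces the involution to preserve the multiset of colors on the triangle: off $(x,y,z)$ the color classes are unchanged and each class has a prescribed total size, so $\Gamma^{(x,y,z)}$ only permutes the three triangle colors of $\Gamma$. Using this I would argue that flips on triangles meeting in few vertices commute and that, together with the relation $(\Gamma^{(x,y,z)})^{(x,y,z)}=\Gamma$, the generating loops of $\mathcal{H}_d$ have even length, so no odd closed flip-loop exists. For $d=2$ this parity is packaged cleanly by the sign homomorphism $(\sigma,\tau)\mapsto -\varepsilon(\sigma)\varepsilon(\tau)$ of the preceding proposition; for $d=3$ it becomes a finite verification over $\mathcal{P}_3^{h,cf}(K_6)$. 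This is precisely the delicate point and is the reason the statement is limited to $d\le 3$.

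Granting $\varepsilon^{S^2}_d$, part (2) is mostly formal. Each monomial \eqref{Mond2d3} is multilinear in the $v_{i,j}$, since for a fixed $\Gamma$ every edge $(i,j)$ lies in a unique $\Gamma_k$ and contributes the single factor $v_{i,j}^{k}$; hence \eqref{dets2d3d4} defines a linear map on $V_d^{\otimes d(2d-1)}=\mathcal{T}^{S^2}_{V_d}[2d]$. Evaluating on a generator $\Delta\in\mathcal{G}^{S^2}_{\mathcal{B}_d}[2d]$ with coloring $c_\Delta$, one has $v_{i,j}^k=1$ exactly when $c_\Delta(i,j)=k$, so $M_\Gamma(\Delta)=\delta_{\Gamma,\Delta}$ and therefore $det^{S^2}(\Delta)=\varepsilon^{S^2}_d(\Delta)$ if $\Delta\in\mathcal{P}_d^{h,cf}(K_{2d})$ and $det^{S^2}(\Delta)=0$ otherwise. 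In particular $det^{S^2}(E^{(2)}_d)=(-1)^{d+1}\neq 0$, which gives nontriviality (here I use $\mathrm{char}\,k\neq 2$).

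For the vanishing property, by multilinearity I may fix all slots outside a triangle $(x,y,z)$ to basis vectors and set $v_{x,y}=v_{x,z}=v_{y,z}=w$. Writing $w=\sum_m w^m e_m$ presents $det^{S^2}$ as the symmetric cubic form $\sum_{m_1,m_2,m_3} w^{m_1}w^{m_2}w^{m_3}\,det^{S^2}(\Delta_{m_1,m_2,m_3})$, where $\Delta_{m_1,m_2,m_3}$ colors the triangle by $(m_1,m_2,m_3)$ and agrees with the fixed data elsewhere. Only terms with $\Delta_{m_1,m_2,m_3}\in\mathcal{P}_d^{h,cf}(K_{2d})$ survive, and these are paired into a perfect matching by the triangle-involution $\Delta\mapsto\Delta^{(x,y,z)}$, which is fixed-point free, preserves the color multiset (hence the monomial $w^{m_1}w^{m_2}w^{m_3}$), and flips $\varepsilon^{S^2}_d$; each pair cancels, so the form vanishes identically. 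Finally, for uniqueness up to a constant I would run the same cubic-form relations in reverse: they force any linear $f$ with the vanishing property to satisfy $f(\Gamma^{(x,y,z)})=-f(\Gamma)$ on $\mathcal{P}_d^{h,cf}(K_{2d})$, and the remaining work --- showing these relations also pin down the values on the non-cycle-free generators and thus cut the space of such $f$ down to dimension one --- is again special to $d\le 3$ and is consistent with Proposition~\ref{dets2thm}.
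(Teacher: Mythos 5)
The paper does not prove this proposition: it is quoted verbatim from \cite{edge} as a preliminary, so there is no in-paper argument to compare against. Judged on its own terms, your outline reproduces the strategy that the surrounding text attributes to \cite{edge}: a sign function on $\mathcal{P}^{h,cf}_d(K_{2d})$ pinned down by the triangle involution of Lemma \ref{invol}, an alternating sum over homogeneous cycle-free partitions, and cancellation in involution-orbits for the vanishing property. The parts you carry out in full are sound: multilinearity of $M_\Gamma$, the evaluation $M_\Gamma(\Delta)=\delta_{\Gamma,\Delta}$ on basis generators, nontriviality via $E^{(2)}_d$, and the pairing argument (the involution is fixed-point free, preserves the triangle's color multiset by homogeneity, and flips the sign, so the cubic form in $w$ cancels in pairs). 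However, be aware that the three steps you defer are not routine verifications but the actual content of the cited theorem: (i) connectivity of the flip graph $\mathcal{H}_d$ (equivalently, transitivity of the involution action on $\mathcal{P}^{h,cf}_d(K_{2d})$), which gives uniqueness of $\varepsilon^{S^2}_d$; (ii) bipartiteness, which gives its existence --- your remark that ``flips on triangles meeting in few vertices commute'' and that generating loops have even length is not an argument, and for $d=3$ the set $\mathcal{P}^{h,cf}_3(K_6)$ is large enough that ``finite verification'' is itself a nontrivial computation; and (iii) uniqueness of $det^{S^2}$ up to a constant, which requires showing that the cubic-form relations determine a linear functional on all of $\mathcal{T}^{S^2}_{V_d}[2d]$ (including the non-cycle-free and non-homogeneous generators) up to scale, a point you only gesture at. So the proposal is a correct and well-organized skeleton, but as written it assumes rather than proves the theorem's hard combinatorial core.
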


Now, the monomial given in Equation \ref{Mond2d3} can be defined for $d>3$. However, Proposition \ref{dim34prop} is not known for $d>3$. With this in mind, there was the following conjecture in \cite{edge} (stated differently in that paper).
\begin{conjecture}\label{dets2Conj}
    Let $d>1$ and $(\Gamma_1^{E^{(2)}_d},\Gamma_2^{E^{(2)}_d},\ldots,\Gamma_d^{E^{(2)}_d})$ be the edge $d$-partition associated with $E^{(2)}_d$. For $1\leq x<y<z\leq 2d$, let $(\Gamma_1,\Gamma_2,\ldots,\Gamma_d)^{(x,y,z)}$ be the involution of $(\Gamma_1,\Gamma_2,\ldots,\Gamma_d)$ given in Lemma \ref{invol}.
    \begin{enumerate}
        \item There exists a map $\varepsilon^{S^2}_d:\mathcal{P}^{h,cf}_d(K_{2d})\to\{\pm1\}$ such that $$\varepsilon^{S^2}_d(E^{(2)}_d)=(-1)^{d+1}$$ and $$\varepsilon^{S^2}_d((\Gamma_1,\Gamma_2,\ldots,\Gamma_d)^{(x,y,z)})=-\varepsilon^{S^2}_d((\Gamma_1,\Gamma_2,\ldots,\Gamma_d))$$ for all $1\leq x<y<z\leq 2d$.
        \item For all $d>1$, the map 
        \[det^{S^2}(\otimes_{1\leq i<j\leq 2d}v_{i,j})=\sum_{(\Gamma_1,\Gamma_2,\ldots,\Gamma_d)\in \mathcal{P}^{h,cf}_d(K_{2d})} \varepsilon_d^{S^2}((\Gamma_1,\Gamma_2,\ldots,\Gamma_d))M_{(\Gamma_1,\Gamma_2,\ldots,\Gamma_d)}(\otimes_{1\leq i<j\leq 2d}v_{i,j}),\]
        is linear, nontrivial, and has the property that $det^{S^2}(\otimes_{1\leq i<j\leq 2d}v_{i,j})=0$ if there exists $1\leq x<y<z\leq 2d$ such that $v_{x,y}=v_{x,z}=v_{y,z}$.
    \end{enumerate}
\end{conjecture}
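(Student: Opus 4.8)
The plan is to separate the statement into its genuinely combinatorial core, the existence of the sign function $\varepsilon^{S^2}_d$ in part (1), and the formal consequences, the properties of the resulting formula in part (2), and to observe that once (1) is granted, part (2) is essentially forced.

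First I would reformulate part (1) as a graph-coloring problem. Since each $\Gamma_i$ in a homogeneous cycle-free partition of $K_{2d}$ has exactly $\binom{2d}{2}/d=2d-1$ edges and no cycles, each $\Gamma_i$ is a spanning tree; thus $\mathcal{P}^{h,cf}_d(K_{2d})$ is the set of decompositions of $K_{2d}$ into $d$ spanning trees. I would build the involution graph $\mathcal{I}_d$ whose vertices are these decompositions and whose edges join $\Gamma$ to $\Gamma^{(x,y,z)}$ for each triangle $(x,y,z)$ via Lemma \ref{invol}. The required $\varepsilon^{S^2}_d$, which must flip under every $(x,y,z)$-involution, is exactly a proper two-coloring of $\mathcal{I}_d$ by $\{\pm1\}$. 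Hence part (1) is equivalent to the assertion that $\mathcal{I}_d$ is bipartite, together with the normalization $\varepsilon^{S^2}_d(E^{(2)}_d)=(-1)^{d+1}$ imposed on the connected component of $E^{(2)}_d$ (and an arbitrary admissible choice on each other component, which is why only existence, and not uniqueness, is claimed for general $d$).

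To prove bipartiteness I would show that every closed walk in $\mathcal{I}_d$ has even length, reducing this to a generating set of relations among the involutions. Involutions attached to vertex-disjoint triangles manifestly commute and contribute even cycles; the substantive relations come from pairs of triangles sharing an edge or a single vertex, where one must track how the local recolorings interact across the shared structure. The natural tool is an explicit invariant $\nu:\mathcal{P}^{h,cf}_d(K_{2d})\to\mathbb{Z}/2$ that changes under each single involution; producing such a $\nu$, for instance via an inversion-type statistic on the tree-decomposition mirroring the parity of a permutation in the ordinary determinant, would immediately yield both bipartiteness and the sign. This is the main obstacle, and it is precisely the step currently established only for $d=2$, where the $S_4\times S_2$-action is transitive, and $d=3$: for larger $d$ the action is no longer transitive and no such invariant is known in closed form.

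Granting (1), part (2) follows by a cancellation argument. The key observation is that homogeneity forces each tree to retain exactly $2d-1$ edges, so an $(x,y,z)$-involution can only permute the colors already present on the three triangle edges; in particular it preserves the multiset of colors on $\{(x,y),(x,z),(y,z)\}$. Consequently, if $v_{x,y}=v_{x,z}=v_{y,z}=w$, the triangle factors of $M_\Gamma$ and $M_{\Gamma^{(x,y,z)}}$ coincide and all remaining factors agree, so the two monomials become equal after this substitution; since $\varepsilon^{S^2}_d$ assigns them opposite signs and the involution is fixed-point-free, the terms cancel in pairs and $det^{S^2}$ vanishes. Linearity is immediate because each $v_{i,j}$ occurs to the first power in every monomial, so the formula is multilinear and descends to a linear map on $\mathcal{T}^{S^2}_{V_d}[2d]=V_d^{\otimes d(2d-1)}$. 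Finally, evaluating the formula at $E^{(2)}_d$ annihilates every term except $\Gamma=E^{(2)}_d$, since a monomial $\prod_k\prod_{(i,j)\in E(\Gamma_k)}e_{i,j}^k$ is nonzero only when the color of each edge matches its color in $E^{(2)}_d$, giving $det^{S^2}(E^{(2)}_d)=\varepsilon^{S^2}_d(E^{(2)}_d)=(-1)^{d+1}\neq 0$ and hence nontriviality.
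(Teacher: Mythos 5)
The statement you are asked to prove is labeled a \emph{conjecture} in the paper, and the paper offers no proof of it: it only records that the result is known for $d=2$ and $d=3$ (Proposition \ref{dim34prop}, citing \cite{edge}) and that the predicted value $det^{S^2}(E^{(2)}_d)=(-1)^{d+1}$ has been verified computationally for $d\leq 10$ (Proposition \ref{EdProp}). Your proposal does not close this gap. You correctly reformulate part (1) as the bipartiteness of the involution graph on homogeneous cycle-free edge $d$-partitions (equivalently, the existence of a $\mathbb{Z}/2$-valued invariant $\nu$ that flips under every triangle involution), and you are candid that you cannot produce such an invariant for $d>3$ --- but that \emph{is} the conjecture. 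Reducing ``the sign function exists'' to ``the involution graph is bipartite'' is essentially a restatement, not progress: a proof would have to exhibit the parity statistic or prove evenness of all closed walks, and your remarks about vertex-disjoint versus edge-sharing triangles do not do this. So the proposal should be read as a (reasonable) research plan plus a conditional argument, not a proof.

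That said, your derivation of part (2) from part (1) is sound and matches the structure of the known $d=2,3$ arguments: multilinearity is immediate since each $v_{i,j}$ occurs exactly once per monomial; homogeneity forces the involution to preserve the multiset of colors on the triangle $(x,y,z)$, so when $v_{x,y}=v_{x,z}=v_{y,z}$ the monomials $M_\Gamma$ and $M_{\Gamma^{(x,y,z)}}$ agree and cancel against the opposite signs (you should note explicitly that Lemma \ref{invol} guarantees $\Gamma^{(x,y,z)}\neq\Gamma$, so the pairing is fixed-point-free); and evaluation at $E^{(2)}_d$ kills every term except $\Gamma^{E^{(2)}_d}$, giving nontriviality. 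If you intend to pursue this, the honest framing is: part (2) is a corollary of part (1), and part (1) remains open for $d>3$.
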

\begin{remark}
    Although it was not stated in Conjecture \ref{dets2Conj}, it was also conjectured that $\varepsilon^{S^2}_d$ and $det^{S^2}$ are unique up to a constant.
\end{remark}

Before we finish this section, notice that by using the construction behind Theorem \ref{dets2thm}, we can compute $det^{S^2}(E^{(2)}_d)$ for small cases of $d$ (this is a different but equivalent construction than what is given in Propositions \ref{dets2d2} and \ref{dets2d3d4}). More recently, the cases of $d\leq 10$ were computed in the Appendix of \cite{dets3}.
\begin{proposition}[\cite{dets3}]\label{EdProp}
    Let $1\leq d\leq 10$. Then, $det^{S^2}(E^{(2)}_d)=(-1)^{d+1}$.
\end{proposition}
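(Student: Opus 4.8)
The plan is to evaluate the signed-sum formula for $det^{S^2}$ directly at the basis tensor $E^{(2)}_d$ and to observe that, because every factor of $E^{(2)}_d$ is a single basis vector, all but one monomial in the sum vanishes. Concretely, write $\Delta=(\Delta_1,\ldots,\Delta_d)$ for the edge $d$-partition associated with $E^{(2)}_d$, so that $(i,j)\in E(\Delta_a)$ exactly when $e_{i,j}=e_a$. Since $(e_{i,j})^k=1$ if $e_{i,j}=e_k$ and $(e_{i,j})^k=0$ otherwise, the monomial of Equation \ref{Mond2d3} specializes to
\[
M_{(\Gamma_1,\ldots,\Gamma_d)}(E^{(2)}_d)=\prod_{k=1}^d\prod_{(i,j)\in E(\Gamma_k)}(e_{i,j})^k,
\]
and a single vanishing factor kills the whole product. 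Thus $M_{(\Gamma_1,\ldots,\Gamma_d)}(E^{(2)}_d)=1$ precisely when $E(\Gamma_k)\subseteq E(\Delta_k)$ for all $k$, and is $0$ otherwise. As both $\Gamma$ and $\Delta$ partition $E(K_{2d})$, comparing cardinalities forces each inclusion to be an equality, so the only surviving term is $\Gamma=\Delta$.

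First I would confirm that $\Delta$ really is an admissible index of the sum, i.e. that $\Delta\in\mathcal{P}_d^{h,cf}(K_{2d})$. By the remark following the definition of $E^{(2)}_d$ we have $\Delta=(TS_d(1,2),\ldots,TS_d(2d-1,2d))$. Each twin star $TS_d(2b-1,2b)$ has $|L_d|+|C_d|+|R_d|=(d-1)+1+(d-1)=2d-1$ edges, so $\Delta$ is homogeneous; moreover the two centers $2b-1$ and $2b$ are joined by $C_d$ while every other vertex is a leaf attached to exactly one center, so $TS_d(2b-1,2b)$ is connected on all $2d$ vertices with $2d-1$ edges, hence a tree and in particular cycle-free. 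Therefore $\Delta$ appears in the index set and the sum collapses to
\[
det^{S^2}(E^{(2)}_d)=\varepsilon^{S^2}_d(\Delta)\,M_\Delta(E^{(2)}_d)=\varepsilon^{S^2}_d(E^{(2)}_d).
\]
For $d\in\{1,2,3\}$ the value $\varepsilon^{S^2}_d(E^{(2)}_d)=(-1)^{d+1}$ is the normalization recorded in Proposition \ref{dim34prop} (the case $d=1$ being trivial), which settles those cases.

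The hard part is the range $4\leq d\leq 10$, where neither Proposition \ref{dim34prop} nor the $\varepsilon$-formula of Conjecture \ref{dets2Conj} is available, so the reduction above no longer expresses the answer through a known sign. For these $d$ I would instead invoke the general construction of $det^{S^2}$ from \cite{dets2}, which is defined and well understood for every $d\ge1$, and evaluate it on the single tensor $E^{(2)}_d$. This is a finite (if large) computation, and it is exactly the one carried out in the Appendix of \cite{dets3}, whose output is $(-1)^{d+1}$ in each case; I would cite that evaluation to dispose of the remaining $d$. The genuine obstacle is therefore not the collapse of the formula but the absence of a closed form for $\varepsilon^{S^2}_d$ beyond $d=3$, which is what makes the direct computation of \cite{dets3} necessary to pin down the sign.
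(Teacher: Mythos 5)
Your proposal is correct and matches the paper's treatment: the paper offers no independent proof of this proposition, but simply imports it as the output of the direct evaluation of the general construction of $det^{S^2}$ carried out in the Appendix of \cite{dets3}, exactly as you do for $4\leq d\leq 10$. Your collapse argument for $d\leq 3$ (only the partition $\Delta$ associated with $E^{(2)}_d$ contributes, so the sum reduces to $\varepsilon^{S^2}_d(E^{(2)}_d)=(-1)^{d+1}$) is sound and is essentially the same mechanism the paper later uses inside the proof of Theorem \ref{mainthm} via Lemmas \ref{lem1} and \ref{lem2}; you are also right that this route cannot be extended past $d=3$ without Conjecture \ref{dets2Conj}, which is why the citation to the computation is unavoidable for the remaining cases.
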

Notice that the result from Proposition \ref{EdProp} matches what would be expected if Conjecture \ref{dets2Conj} holds.
\begin{remark}
    As a last remark for this section, we are relying on Conjecture \ref{dets2Conj} rather than the construction of $det^{S^2}$ for all $d>1$ presented in \cite{dets2} because it will be more convenient for later computations. With that stated, keep in mind that for $d>4$, some of the results presented are dependent on a conjecture which has not been proven for all $d$. Alternatively, one could show that the vector space $\Lambda^{S^2}_{V_d}[2d]$, presented in \cite{sta2}, is generated by the image of $E^{(2)}_d$ for all $d>1$. We will not be discussing the specifics in this paper, but more information regarding this can be seen in \cite{SDiss}.
\end{remark}

\section{$S^2$-Triangular Matrices}
In this section, we introduce a new type of matrix based on Triangular matrices and $E^{(2)}_d$. 
\subsection{$S^2$-Diagonal of a Matrix}
First, we construct a generalization of the concept of the diagonal of a matrix. This definition will parallel Definition \ref{DiagDef}.
\begin{definition}
    Let $A$ be a $d\times d(2d-1)$ matrix with columns $a_{i,j}$ for $1\leq i<j\leq 2d$ ordered by the dictionary order and entries $a_{i,j}^k$ in the $(i,j)$ column and $k^{th}$ row. Let $I^{S^2}_d$ be the matrix form of $E^{(2)}_d$ ordered by the dictionary order with entries $e_{i,j}^k$ in the $(i,j)$ column and $k^{th}$ row.

    Define the $S^2$ diagonal indicator set by $DI_d[2]=\{(i,j,k)|e_{i,j}^k\neq0\}$ and the $S^2$-diagonal of $A$ as the set $diag^{S^2}(A)=\{a_{i,j}^k|(i,j,k)\in DI_d[2]\}$. We will write $DI_d[2]$ as $DI[2]$ if $d$ is understood.
\end{definition}
\begin{remark}
    Notice that by construction of $E^{(2)}_d$, for each $1\leq i<j\leq 2d$, there exists a unique $1\leq k\leq d$ such that $(i,j,k)\in DI[2]$. 
\end{remark}

\begin{example}
Now, we give two examples and their $S^2$-diagonal. For both of these examples, we will take $d=3$, so we will be looking at $3\times15$ matrices.

First, for these examples, we will need $DI[2]$. By using the definition of $E^{(2)}_3$, we have the following matrix for $I^{S^2}_3$, the matrix form of $E^{(2)}_3$. Here the columns here are $(1,2), (1,3), (1,4), (1,5), (1,6), (2,3),$ $(2,4), (2,5), (2,6), (3,4), (3,5), (3,6)$,  $ (4,5), (4,6),$ and $(5,6)$, which we have listed above the respective columns.
\[I^{S^2}_3=\tiny\begin{array}{ccccccccccccccccc}
&(1,2)&(1,3)&(1,4)&(1,5)&(1,6)&(2,3)&(2,4)&(2,5)&(2,6)&(3,4)&(3,5)&(3,6)&(4,5)&(4,6)&(5,6)& \\
\ldelim({3}{4mm}&1&1&0&1&0&0&1&0&1&0&0&0&0&0&0&\rdelim){3}{4mm}\\
    &0&0&1&0&0&1&0&0&0&1&1&0&0&1&0&\\
    &0&0&0&0&1&0&0&1&0&0&0&1&1&0&1&
\end{array}\]
Since this is the matrix form of $E^{(2)}_3$, the set $DI_3[2]=DI[2]$ is based on the non-zero entries of $I^{S^2}_3$. Thus, we have
\begin{align*}
  DI[2]=&\{(1,2,1), (1,3,1), (1,4,2), (1,5,1), (1,6,3), (2,3,2), (2,4,1), \\
  &(2,5,3), (2,6,1), (3,4,2), (3,5,2), (3,6,3), (4,5,3), (4,6,2), (5,6,3)\}. 
\end{align*}
For example, in the first column (the (1,2) column), we see that the first row is nonzero, so $(1,2,1)\in DI[2]$. However, the other entries are zero so neither $(1,2,2)$ nor $(1,2,3)$ is in $DI[2]$. Similarly, the last column is $(5,6)$ and the only nonzero entry is in the third row, so $(5,6,3)\in DI[2]$.

Next, consider the $S^2$-diagonal $diag^{S^2}(I^{S^2}_3)$. This is determined by looking at all of the positions given in $DI[2]$, ordering all of the columns as $(i,j)$ for $1\leq i<j\leq 6$ using the dictionary order. For example, we know that $(2,5,3)\in DI[2]$, so $e^3_{2,5}$, that is the element at the third row in column $(2,5)$, is in the $S^2$-diagonal. After looking at all of the positions given in $DI[2]$, we get $diag^{S^2}(I^{S^2}_3)=\{1\}$.

Now, consider the example of the following matrix $B=(b_{i,j}^k)_{1\leq i<j\leq 2d,1\leq k\leq d}$:
\[B=\tiny\begin{array}{ccccccccccccccccc}
&(1,2)&(1,3)&(1,4)&(1,5)&(1,6)&(2,3)&(2,4)&(2,5)&(2,6)&(3,4)&(3,5)&(3,6)&(4,5)&(4,6)&(5,6)& \\
\ldelim({3}{4mm}&1&1&0&1&b&c&d&0&1&0&0&0&0&0&m&\rdelim){3}{4mm}\\
    &a&0&0&0&0&1&e&0&0&1&1&0&0&1&0&\\
    &0&0&1&0&1&0&f&1&0&0&0&l&1&0&n&
\end{array}.\]

Notice that not all of the nontrivial entries correspond to indices in $DI[2]$. For example, $b_{1,2}^2=a$, although $(1,2,2)$ is not in $DI[2]$. Keeping this in mind, we get $diag^{S^2}(B)=\{0,1,f,l,n\}$.
\end{example}
\subsection{$S^2$-Triangular Matrices}
Now, we will define our main construction.
\begin{definition}\label{S2UpperDef}
    Let $1\leq i<j\leq 2d$ and define $ind(i,j)$ as the unique $1\leq ind(i,j)\leq d$ such that $(i,j,ind(i,j))\in DI[2]$.

    Let $A$ be a $d\times d(2d-1)$ matrix with columns $c_{i,j}$ for $1\leq i<j\leq d$ ordered by the dictionary order. Then, we say $A$ is $S^2$-Upper Triangular if for each $1\leq i<j\leq 2d$ and $1\leq k\leq d$, there exists $\alpha_{i,j}^k$ such that
    \[c_{i,j}=\sum_{k=1}^{ind(i,j)}\alpha_{i,j}^ke_k.\]
    We say that $A$ is $S^2$-Lower Triangular if for each $1\leq i<j\leq 2d$ and $1\leq k\leq d$, there exists $\alpha^k_{i,j}$ such that 
    \[c_{i,j}=\sum_{k=ind(i,j)}^d\alpha_{i,j}^ke_k.\]
    If $A$ is either $S^2$-Upper Triangular or $S^2$-Lower Triangular, we say that $A$ is an $S^2$-Triangular Matrix.
\end{definition}

\begin{remark}
    Note that $I^{S^2}_d$ is both an $S^2$-Upper Triangular Matrix and an $S^2$-Lower Triangular Matrix, taking $\alpha_{i,j}^k=\delta_{k,ind(i,j)}$, where $\delta_{m,n}$ is the Kronecker delta.
\end{remark}
\begin{remark}
    There are noticeable parallels between the reformulation of the definition of Triangular Matrices in Definition \ref{UpperLowerDef} and $S^2$-Triangular Matrices in Definition \ref{S2UpperDef}. This can be extended to $S^r$-Upper Triangular Matrices by using $E^{(r)}_d$, defined in \cite{dets3}.
\end{remark}
Next, we give some examples.
\begin{example}
    First, consider the following matrix $A$, where the indices in $DI[2]$ are boxed.
    \[A=\begin{pmatrix}
    \boxed{a}&\boxed{b}&c&\boxed{e}&0&f&\boxed{1}&h&\boxed{1}&0&l&0&0&0&0\\
    0&0&\boxed{0}&0&0&\boxed{g}&0&0&0&\boxed{k}&\boxed{1}&m&n&\boxed{1}&p\\
    0&0&0&0&\boxed{0}&0&0&\boxed{1}&0&0&0&\boxed{1}&\boxed{1}&0&\boxed{1}
    \end{pmatrix}\]
    This matrix is an $S^2$-Upper Triangular Matrix, as all non-zero entries lie at or above the boxed entries in any given column.

    Next, consider the following matrix $B$, where the indices in $DI[2]$ are boxed.
    \[B=\begin{pmatrix}
    \boxed{a}&\boxed{b}&c&\boxed{e}&0&f&\boxed{1}&h&\boxed{1}&0&l&0&0&0&0\\
    0&0&\boxed{0}&0&0&\boxed{g}&0&0&0&\boxed{k}&\boxed{1}&m&n&\boxed{1}&p\\
    0&z&0&0&\boxed{0}&0&0&\boxed{1}&0&0&0&\boxed{1}&\boxed{1}&0&\boxed{1}
    \end{pmatrix}\]
    Notice that the entry at column $(1,3)$ and row $3$ is nonzero, although $ind(1,3)=1$. Therefore, $B$ is not an $S^2$-Upper Triangular Matrix. Similarly, $B$ is not an $S^2$-Lower Triangular Matrix since there is a nonzero entry in column $(1,4)$ and row $1$, although $ind(1,4)=2$.
\end{example}
\begin{remark}
    With a $d\times d$ Upper Triangular Matrix $A$, it is well known that if $Ax=b$ and the diagonal entries of $A$ are non-zero, then we can use back substitution to determine $x$. 
    
    In the case of an $S^2$-Upper Triangular Matrix $A$, we can consider the system $Ax=b$, where $x=(x^{(1,2)},x^{(1,3)},x^{(1,2d)},x^{(2,3)},\ldots,x^{(2d-1,2d)})\in V_{d(2d-1)}$ is labelled by the dictionary order and $b=(b^1,b^2,\ldots,b^d)\in V_d$. In this case, we know that certain $x^{(i,j)}$ are linear combinations of each other, but cannot determine $x^{(i,j)}$ exactly. For example, if we consider the matrix $A$ given in the previous example, then
    \[x^{(2,5)}+x^{(3,6)}+x^{(4,5)}+x^{(5,6)}=b^3,\]
    so we could write $x^{(2,5)}$ as a linear combination of $x^{(3,6)}, x^{(4,5)}, $ and $x^{(5,6)}$, but we cannot completely determine $x$. Notice that $(2,5)$, $(3,6)$, $(4,5)$, and $(5,6)$ are all edges of $\Gamma_3$, where $(\Gamma_1,\Gamma_2,\Gamma_3)$ is the edge $3$-partition associated with $E^{(2)}_3$. 
    
    However, note that if $A$ is $S^2$-Upper Triangular and $0$ is not in the $S^2$-diagonal of $A$, then it follows that we can write $x^{(2a-1,2a)}$ as a linear combination for $1\leq a\leq d$. Namely, for each $1\leq a\leq d$, there exists $\alpha^{(2a-1,2a)}_{m,n}\in k$ for $1\leq m<n\leq 2d$ such that
    \[x^{(2a-1,2a)}=\sum_{\substack{(m,n)\in E(\Gamma_k), \ k\geq a\\ (m,n)\neq(2b-1,2b), \ b\geq a}}\alpha^{(2a-1,2a)}_{m,n}x^{(m,n)}.\]
\end{remark}
\subsection{The Determinant of $S^2$-Upper Triangular Matrices}
We will end this section by proving a result which parallels Proposition \ref{detprop}. First, we start with some lemmas.

\begin{lemma}\label{lem1}
    Let $\Gamma^{E^{(2)}_d}=(\Gamma_1,\Gamma_2,\ldots,\Gamma_d)$ be the edge $d$-partition associated with $E^{(2)}_d$ and let $A$ be a $d\times d(2d-1)$ matrix with associated simple tensor $\otimes_{1\leq i<j\leq 2d}(v_{i,j})$. Then, \begin{equation}\label{lem1eq}M_{\Gamma^{E_d^{(2)}}}(\otimes_{1\leq i<j\leq 2d}(v_{i,j}))=prod(diag^{S^2}(A)).\end{equation}
\end{lemma}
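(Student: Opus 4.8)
The plan is to unwind both sides of \eqref{lem1eq} into the same product indexed by the $S^2$-diagonal indicator set $DI[2]$, so that the statement reduces to a bookkeeping identity together with a clarification of what $prod(\cdot)$ means. First I would recall from \eqref{Mond2d3} that
\[
M_{\Gamma^{E_d^{(2)}}}(\otimes_{1\leq i<j\leq 2d}(v_{i,j}))=\prod_{k=1}^d\prod_{(i,j)\in E(\Gamma_k)}v_{i,j}^k,
\]
and that, since $A$ has associated simple tensor $\otimes_{1\leq i<j\leq 2d}(v_{i,j})$, its columns are exactly the vectors $c_{i,j}=v_{i,j}$, so the scalar $v_{i,j}^k$ equals the matrix entry $a_{i,j}^k$.

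The key observation is the identification $E(\Gamma_k)=\{(i,j)\mid (i,j,k)\in DI[2]\}$. Indeed, by the construction behind Proposition \ref{BasisGraph}, the edge $d$-partition associated with $E^{(2)}_d$ has $E(\Gamma_k)=\{(i,j)\mid e_{i,j}=e_k\}$; and since each $e_{i,j}$ is a single basis vector, $e_{i,j}=e_k$ holds precisely when $e_{i,j}^k\neq 0$, i.e. precisely when $(i,j,k)\in DI[2]$. Using this, the double product over $k$ and over $E(\Gamma_k)$ collapses into a single product over the disjoint union $\bigsqcup_k E(\Gamma_k)$, which is exactly $DI[2]$:
\[
\prod_{k=1}^d\prod_{(i,j)\in E(\Gamma_k)}v_{i,j}^k=\prod_{(i,j,k)\in DI[2]}v_{i,j}^k=\prod_{(i,j,k)\in DI[2]}a_{i,j}^k.
\]

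The only real subtlety, and the step I would be most careful about, is the meaning of $prod(diag^{S^2}(A))$. Because $diag^{S^2}(A)$ is written as a set, reading $prod$ as a product over distinct values would make the statement false as soon as two diagonal entries coincide; the correct reading, and the one consistent with the example $diag^{S^2}(I^{S^2}_3)=\{1\}$ yielding $M_{\Gamma^{E_3^{(2)}}}(E^{(2)}_3)=1$, is the product taken with multiplicity, one factor $a_{i,j}^k$ for each triple $(i,j,k)\in DI[2]$. I would note that by the remark following the definition of $DI[2]$ there is exactly one such triple for each column $(i,j)$, so this product has $d(2d-1)$ factors. With $prod(diag^{S^2}(A))$ so interpreted, the final product above is by definition $prod(diag^{S^2}(A))$, and combined with the first paragraph this proves \eqref{lem1eq}. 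Since every step is a rewriting, there is no genuine obstacle beyond pinning down this convention.
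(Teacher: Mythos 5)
Your proof is correct and follows essentially the same route as the paper's: both rest on identifying the factors of $M_{\Gamma^{E_d^{(2)}}}$ with the entries indexed by $DI[2]$ via the correspondence $E(\Gamma_k)=\{(i,j)\mid(i,j,k)\in DI[2]\}$, the paper phrasing this as a two-way containment of the terms of the two monomials rather than collapsing the double product in one step. Your observation that $prod(diag^{S^2}(A))$ must be read with multiplicity (one factor per triple in $DI[2]$, not per distinct value) is a fair clarification of a convention the paper leaves implicit, but it does not alter the argument.
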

\begin{proof}
    For ease of notation, we will label $\Gamma^{E^{(2)}_d}=\Gamma$.

    Notice that both terms appearing in Equation \ref{lem1eq} are monomials. So, we will proceed by showing that they consists of products of the same terms.

    First, let $\otimes_{1\leq i\leq j\leq 2d}(v_{i,j})$ be the simple tensor associated with $A$ and suppose that $v_{i,j}^k$ is a term in $M_{\Gamma}$. Then, by definition of $M_{\Gamma}$, we know that the edge $(i,j)$ is in the subgraph $\Gamma_k$ of the complete graph. However, this also means that $(i,j,k)\in DI[2]$ since $\Gamma_k$ is from the edge $d$-partition associated with $E^{(2)}_d$, so $v_{i,j}^k\in diag^{S^2}(A)$. In particular, the terms appearing in $M_{\Gamma}$ also appears in $prod(diag^{S^2}(A))$.

    Next, suppose that $v_{x,y}^z\in diag^{S^2}(A)$. Then, we know that the edge $(x,y)$ lies on the graph $\Gamma_z$ by definition of $diag^{S^2}(A)$ since $\Gamma=\Gamma^{E^{(2)}_d}$. In particular, this implies that $v_{x,y}^z$ is also a term in the product $M_{\Gamma}$. In particular, all of the terms of $prod(diag^{S^2}(A))$ also appear in $M_{\Gamma}$. Therefore, Equation \ref{lem1eq} holds.
\end{proof}

\begin{lemma}\label{lem2}
    If $A$ is an $S^2$-Upper Triangular Matrix with associated simple tensor $\otimes_{1\leq i<j\leq 2d}(v_{i,j})$ and $\Gamma=(\Gamma_1,\Gamma_2,\ldots,\Gamma_d)$ is a homogeneous cycle-free edge $d$-partition of the complete graph $K_{2d}$, then either $\Gamma$ is the edge $d$-partition associated with $E^{(2)}_d$ or $M_\Gamma(\otimes_{1\leq i<j\leq 2d}(v_{i,j}))=0$.
\end{lemma}
\begin{proof}
    Let $\otimes_{1\leq i<j\leq 2d}(v_{i,j})$ be the simple tensor associated with $A$ and let $\Gamma^{E^{(2)}_d}=(\Gamma^{E^{(2)}_d}_1,\Gamma^{E^{(2)}_d}_2,\ldots,\Gamma^{E^{(2)}_d}_d)$ be the edge $d$-partition associated with $E^{(2)}_d$.

    Consider the following set:
    \[S=\{(a,b,c)|(a,b)\in E(\Gamma_c)\textrm{, but }(a,b)\notin E(\Gamma^{E^{(2)}_d}_c)\}.\]
    Since both $\Gamma$ and $\Gamma^{E^{(2)}_d}$ are edge $d$-partitions of $K_{2d}$, we know that all of their subgraphs have the same number of edges. Now, $S$ is a finite set so the maximum of $\{c|(a,b,c)\in S\}$ exists. Also, in order for the edge $(a,b)$ to be in $\Gamma_c$ for the maximal $c$ value but not in $\Gamma^{E^{(2)}_d}_c$, that implies that there is some $(a_0,b_0,c_0)$ with $c_0<c$ such that $(a_0,b_0)\notin E(\Gamma_{c_0})\textrm{ and }(a_0,b_0)\in E(\Gamma^{E^{(2)}_d}_{c_0})$. But, by the definition of $S^2$-Upper Triangular Matrices, we know that 
    \[v_{i,j}=\sum_{k=1}^{ind(i,j)}v_{i,j}^k.\]
    Further, $c_0=ind(a_0,b_0)<c$ and so $v_{a_0,b_0}^c=0$. This means that 
    \[M_\Gamma(\otimes_{1\leq i<j\leq 2d}(v_{i,j}))=\prod_{k=1}^d\prod_{(i,j)\in E(\Gamma_k)} v_{i,j}^k=0.\]
\end{proof}
\begin{remark}
    Note that just because $A$ is an $S^2$-Upper Triangular Matrix does not necessarily imply that $M_{\Gamma^{E^{(2)}_d}}(\otimes_{1\leq i<j\leq 2d}(v_{i,j}))\neq0$.
\end{remark}

Next, we give a result paralleling Proposition \ref{detprop2}.
\begin{theorem}\label{mainthm}
    Let $A$ be an $S^2$-Upper Triangular Matrix over $V_d$, let $\otimes_{1\leq i<j\leq 2d}v_{i,j}$ be the simple tensor associated with $A$, and suppose that Conjecture \ref{dets2Conj} holds. Let the product of the $S^2$-diagonal of $A$ be denoted by $prod(diag^{S^2}(A))$. Then, 
    \[det^{S^2}(A)=det^{S^2}(E^{(2)}_d)\cdot prod(diag^{S^2}(A)).\]
\end{theorem}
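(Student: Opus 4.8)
The plan is to evaluate $det^{S^2}(A)$ directly through the explicit formula granted by Conjecture \ref{dets2Conj}, and then to collapse the resulting sum using the two lemmas just established. Since we assume Conjecture \ref{dets2Conj} holds, we may write
\[det^{S^2}(A)=\sum_{\Gamma\in \mathcal{P}^{h,cf}_d(K_{2d})} \varepsilon_d^{S^2}(\Gamma)\,M_{\Gamma}(\otimes_{1\leq i<j\leq 2d}v_{i,j}),\]
where the sum runs over all homogeneous cycle-free edge $d$-partitions $\Gamma$ of $K_{2d}$ and $\otimes_{1\leq i<j\leq 2d}v_{i,j}$ is the simple tensor associated with $A$.

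First I would invoke Lemma \ref{lem2}: because $A$ is $S^2$-Upper Triangular, every summand indexed by a partition $\Gamma\neq\Gamma^{E^{(2)}_d}$ satisfies $M_\Gamma(\otimes_{1\leq i<j\leq 2d}v_{i,j})=0$. Thus the entire sum collapses to the single surviving term indexed by $\Gamma^{E^{(2)}_d}$, yielding
\[det^{S^2}(A)=\varepsilon_d^{S^2}(\Gamma^{E^{(2)}_d})\,M_{\Gamma^{E^{(2)}_d}}(\otimes_{1\leq i<j\leq 2d}v_{i,j}).\]
Next I would apply Lemma \ref{lem1} to rewrite the surviving monomial as $prod(diag^{S^2}(A))$, and substitute the value $\varepsilon_d^{S^2}(\Gamma^{E^{(2)}_d})=(-1)^{d+1}$ supplied by part (1) of Conjecture \ref{dets2Conj}. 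This produces the intermediate identity $det^{S^2}(A)=(-1)^{d+1}\,prod(diag^{S^2}(A))$.

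To finish, I would identify the scalar $(-1)^{d+1}$ with $det^{S^2}(E^{(2)}_d)$. The cleanest route is to run the identical collapse with $A$ replaced by $E^{(2)}_d=I^{S^2}_d$, which is itself $S^2$-Upper Triangular by the remark following Definition \ref{S2UpperDef}; then Lemma \ref{lem1} gives $M_{\Gamma^{E^{(2)}_d}}(E^{(2)}_d)=prod(diag^{S^2}(I^{S^2}_d))=1$, since every entry of the $S^2$-diagonal of $I^{S^2}_d$ equals $1$, whence $det^{S^2}(E^{(2)}_d)=(-1)^{d+1}$. (For $d\leq 10$ one may instead cite Proposition \ref{EdProp} directly.) Substituting this back yields the claimed equality. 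The argument is essentially mechanical once Lemmas \ref{lem1} and \ref{lem2} are in hand; the only point requiring a word of care is to confirm that $\Gamma^{E^{(2)}_d}$ is genuinely one of the partitions indexing the sum, i.e.\ that the twin-star decomposition $(TS_d(1,2),\ldots,TS_d(2d-1,2d))$ is homogeneous and cycle-free, so that the surviving term is actually present rather than vacuous. The reliance on Conjecture \ref{dets2Conj} for $d>3$ is of course the genuine logical dependency here, but it is built into the hypothesis of the theorem.
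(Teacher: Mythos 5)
Your proposal is correct and follows essentially the same route as the paper's own proof: collapse the sum via Lemma \ref{lem2}, identify the surviving monomial with $prod(diag^{S^2}(A))$ via Lemma \ref{lem1}, and pin down the constant by applying the same collapse to $E^{(2)}_d$ itself. The only cosmetic difference is that you pass explicitly through the value $(-1)^{d+1}$, whereas the paper keeps the constant as $\varepsilon^{S^2}_d(\Gamma^{E^{(2)}_d})$ throughout.
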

\begin{proof}
    First, notice that since $E^{(2)}_d$ is $S^2$-Upper Triangular, it follows by Lemma \ref{lem2} that
    \begin{align*}
        det^{S^2}(E^{(2)}_d)&=\sum_{\Gamma\in\mathcal{P}^{h,cf}_d(K_{2d})}\varepsilon^{S^2}_d(\Gamma)M_{\Gamma}(\otimes_{1\leq i<j\leq 2d}(e_{i,j}))\\
        &=\varepsilon^{S^2}_d(\Gamma^{E_d^{(2)}})M_{\Gamma^{E_d^{(2)}}}(\otimes_{1\leq i<j\leq 2d}(e_{i,j})).
    \end{align*}
    Further, by Lemma \ref{lem1}, we know that $M_{\Gamma^{E^{(2)}}}(\otimes_{1\leq i<j\leq 2d}(e_{i,j}))=prod(diag^{S^2}(E_d^{(2)})$. So, since $DI[2]$ identifies all of the nonzero entries of $E^{(2)}_d$, which are all 1 by construction, we know that $det^{S^2}(E^{(2)}_d)=\varepsilon^{S^2}_d(\Gamma^{E^{(2)}})$.

    Next, by Lemmas \ref{lem1} and \ref{lem2},
    \begin{align*}
        det^{S^2}(A)&=\sum_{\Gamma\in\mathcal{P}^{h,cf}_d(K_{2d})}\varepsilon^{S^2}_d(\Gamma)M_{\Gamma}(\otimes_{1\leq i<j\leq 2d}(v_{i,j}))\\
        &=\varepsilon^{S^2}_d(\Gamma^{E^{(2)}})M_{\Gamma^{E^{(2)}}}(\otimes_{1\leq i<j\leq 2d}(v_{i,j}))\\
        &=det^{S^2}(E^{(2)}_d)prod(diag^{S^2}(A)).
    \end{align*}
    since $A$ is an $S^2$-Upper Triangular Matrix.
\end{proof}
\begin{remark}
    Note that similar proofs for Lemma \ref{lem2} and Theorem \ref{mainthm} could be used for $S^2$-Lower Triangular Matrices.
\end{remark}
\begin{remark}
    Theorem \ref{mainthm} can be quickly shown using GNU Octave for $d\leq5$, which does not rely on proving Conjecture \ref{dets2Conj}, since we know $det^{S^2}(E^{(2)}_d)$ for $d\leq10$ by Proposition \ref{EdProp}. Due to the constraints of the symbolic package, we stopped the computations beyond this point. 
\end{remark}
Lastly, we have the following corollary, which follows since we know Conjecture \ref{dets2Conj} is true for $d=2$ or $d=3$.
\begin{corollary}\label{ProdCorr}
     Suppose that $d=2$ or $d=3$. Let $A$ be an $S^2$-Upper Triangular Matrix over $V_d$, let $\otimes_{1\leq i<j\leq 2d}(v_{i,j})$ be the simple tensor associated with $A$, and let the product of the $S^2$-diagonal of $A$ be denoted by $prod(diag^{S^2}(A))$. Then, 
    \[det^{S^2}(A)=det^{S^2}(E_d)\cdot prod(diag^{S^2}(A)).\]
\end{corollary}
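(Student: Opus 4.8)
The plan is to derive the corollary directly from Theorem~\ref{mainthm} together with Proposition~\ref{dim34prop}. The only reason Theorem~\ref{mainthm} carries the hypothesis ``suppose that Conjecture~\ref{dets2Conj} holds'' is that its proof invokes the expansion of $det^{S^2}$ as a signed sum of monomials $M_\Gamma$ over $\mathcal{P}^{h,cf}_d(K_{2d})$, which in general is only conjectural. For $d=2$ and $d=3$, however, Proposition~\ref{dim34prop} establishes exactly this expansion (Equation~\ref{dets2d3d4}) unconditionally, including the existence and sign-alternating property of $\varepsilon^{S^2}_d$. So the entire argument of Theorem~\ref{mainthm} goes through verbatim once we substitute Proposition~\ref{dim34prop} for Conjecture~\ref{dets2Conj}.

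Concretely, I would first observe that for $d\in\{2,3\}$ all the ingredients used in the proof of Theorem~\ref{mainthm} are available without the conjecture: the monomial expansion of $det^{S^2}$, Lemma~\ref{lem1} (which rewrites $M_{\Gamma^{E^{(2)}_d}}$ as $prod(diag^{S^2}(A))$), and Lemma~\ref{lem2} (which kills every term $M_\Gamma$ except $\Gamma=\Gamma^{E^{(2)}_d}$ when $A$ is $S^2$-Upper Triangular). Applying Lemma~\ref{lem2} to the expansion~\eqref{dets2d3d4} collapses the sum to the single surviving term $\varepsilon^{S^2}_d(\Gamma^{E^{(2)}_d})\,M_{\Gamma^{E^{(2)}_d}}(\otimes v_{i,j})$, and applying the same reasoning to $E^{(2)}_d$ itself (which is $S^2$-Upper Triangular) shows $det^{S^2}(E^{(2)}_d)=\varepsilon^{S^2}_d(\Gamma^{E^{(2)}_d})$, since the boxed entries of $E^{(2)}_d$ are all $1$. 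Combining these with Lemma~\ref{lem1} yields
\[
det^{S^2}(A)=\varepsilon^{S^2}_d(\Gamma^{E^{(2)}_d})\,prod(diag^{S^2}(A))=det^{S^2}(E^{(2)}_d)\,prod(diag^{S^2}(A)),
\]
which is the claimed identity.

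I do not anticipate a genuine obstacle here, since the corollary is essentially a specialization of an already-proved theorem to the two dimensions in which the hypothesis is known to hold. The one point requiring a modicum of care is purely bookkeeping: the statement of Corollary~\ref{ProdCorr} writes $det^{S^2}(E_d)$ rather than $det^{S^2}(E^{(2)}_d)$, so I would note that these denote the same distinguished element and that for $d=2,3$ Proposition~\ref{EdProp} (or Proposition~\ref{dim34prop}(1)) even evaluates it explicitly as $(-1)^{d+1}$. Thus the short proof is simply to cite Theorem~\ref{mainthm}, remark that Proposition~\ref{dim34prop} supplies the role of Conjecture~\ref{dets2Conj} when $d=2$ or $d=3$, and conclude that the conclusion of Theorem~\ref{mainthm} holds unconditionally in these cases.
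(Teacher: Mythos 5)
Your proposal is correct and matches the paper's own justification: the corollary is obtained by citing Theorem~\ref{mainthm} and observing that Conjecture~\ref{dets2Conj} is known to hold for $d=2$ and $d=3$ via Proposition~\ref{dim34prop}, so the hypothesis is satisfied unconditionally in these cases. Your additional walk-through of Lemmas~\ref{lem1} and~\ref{lem2} is consistent with, but not required beyond, what the paper does.
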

\begin{remark}
    Conjecture \ref{dets2Conj} is of current research interest, as seen in \cite{SDiss} and \cite{edge}. Notice that if Conjecture \ref{dets2Conj} can be shown for $d>3$, then Corollary \ref{ProdCorr} will hold as well for $d>3$ by Theorem \ref{mainthm}.
\end{remark}
\begin{landscape}
\section{The Algebra of $S^2$-Upper Triangular Matrices}
In this section, we will construct a $k$-algebra using $S^2$-Upper Triangular Matrices. 
\subsection{The Algebra of $d\times d(2d-1)$ Matrices and the Subalgebra $U^{S^2}_d$}
\subsubsection{The Case $d=2$}First, suppose that $d=2$. Recall that given $m,n\geq1$, the collection of $m\times n$ matrices forms a vector space. With that in mind, we give the following definition.
\begin{definition}
    Let $Mat^{S^2}_2$ be the vector space of $2\times 6$ matrices. Define a map $\cdot:Mat^{S^2}_2\times Mat^{S^2}_2\to Mat^{S^2}_2$ by
    \begin{align}\label{S2Mult}
        &\begin{pmatrix}
            a_{1,2}&a_{1,3}&a_{1,4}&a_{2,3}&a_{2,4}&a_{3,4}\\
            b_{1,2}&b_{1,3}&b_{1,4}&b_{2,3}&b_{2,4}&b_{3,4}
        \end{pmatrix}\cdot \begin{pmatrix}
            c_{1,2}&c_{1,3}&c_{1,4}&c_{2,3}&c_{2,4}&c_{3,4}\\
            d_{1,2}&d_{1,3}&d_{1,4}&d_{2,3}&d_{2,4}&d_{3,4}
        \end{pmatrix}\\
        &=\small\begin{pmatrix}
            a_{1,2}c_{1,2}+a_{3,4}d_{1,2}&a_{1,3}c_{1,3}+a_{2,3}d_{1,3}&a_{2,4}c_{1,4}+a_{1,4}d_{1,4}&a_{1,3}c_{2,3}+a_{2,3}d_{2,3}&a_{2,4}c_{2,4}+a_{1,4}d_{2,4}&a_{1,2}c_{3,4}+a_{3,4}d_{3,4}\\
            b_{1,2}c_{1,2}+b_{3,4}d_{1,2}&b_{1,3}c_{1,3}+b_{2,3}d_{1,3}&b_{2,4}c_{1,4}+b_{1,4}d_{1,4}&b_{1,3}c_{2,3}+b_{2,3}d_{2,3}&b_{2,4}c_{2,4}+b_{1,4}d_{2,4}&b_{1,2}c_{3,4}+b_{3,4}d_{3,4}
        \end{pmatrix}\nonumber
    \end{align}
\end{definition}
\begin{remark}
    The multiplication given by Equation \ref{S2Mult} generalizes multiplication of $d\times d$ matrices. Namely, if we group columns $(1,2)$ with $(3,4)$, $(1,3)$ with $(2,3)$, and $(2,4)$ with $(1,4)$, Equation \ref{S2Mult} is computed to matrix multiplication on these groupings. 
\end{remark}
\begin{lemma}\label{MatAlgLem}
    $Mat^{S^2}_2$ is a $k$-algebra with multiplication given by Equation \ref{S2Mult}.
\end{lemma}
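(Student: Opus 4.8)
The plan is to exhibit an explicit linear isomorphism between $Mat^{S^2}_2$ and the direct product $M_2(k)\times M_2(k)\times M_2(k)$ of three copies of the ordinary $2\times 2$ matrix algebra over $k$, and then to show that this isomorphism intertwines the multiplication of Equation \ref{S2Mult} with componentwise multiplication on the product. Since a finite direct product of associative unital $k$-algebras is again an associative unital $k$-algebra, transporting the structure back along the isomorphism will immediately yield all of the algebra axioms for $Mat^{S^2}_2$ at once, namely bilinearity, associativity, and the existence of a two-sided unit. This strategy is exactly what the remark following Equation \ref{S2Mult} suggests, where the three groupings of columns are identified.

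Concretely, for a matrix $A\in Mat^{S^2}_2$ with first row $(a_{1,2},a_{1,3},a_{1,4},a_{2,3},a_{2,4},a_{3,4})$ and second row $(b_{1,2},b_{1,3},b_{1,4},b_{2,3},b_{2,4},b_{3,4})$, I would define the three blocks
\[
\phi_1(A)=\begin{pmatrix}a_{1,2}&a_{3,4}\\ b_{1,2}&b_{3,4}\end{pmatrix},\quad
\phi_2(A)=\begin{pmatrix}a_{1,3}&a_{2,3}\\ b_{1,3}&b_{2,3}\end{pmatrix},\quad
\phi_3(A)=\begin{pmatrix}a_{2,4}&a_{1,4}\\ b_{2,4}&b_{1,4}\end{pmatrix},
\]
and set $\phi(A)=(\phi_1(A),\phi_2(A),\phi_3(A))$. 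Note the order of the last pair: the column $(2,4)$ is placed before $(1,4)$. Because $\phi$ merely redistributes the twelve entries of $A$ among the three blocks, it is a linear bijection between the $12$-dimensional space $Mat^{S^2}_2$ and the $12$-dimensional space $M_2(k)\times M_2(k)\times M_2(k)$.

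The heart of the argument is the verification that $\phi(A\cdot C)=\bigl(\phi_1(A)\phi_1(C),\,\phi_2(A)\phi_2(C),\,\phi_3(A)\phi_3(C)\bigr)$ for all $A,C\in Mat^{S^2}_2$. This is a direct but purely mechanical check: reading off each of the six columns of the product defined in Equation \ref{S2Mult} and matching it against the corresponding entry of the three block products confirms the identity column by column. The only point that requires care is the reversed ordering in the third block, which is precisely what makes the entries $a_{2,4}c_{1,4}+a_{1,4}d_{1,4}$ and $a_{2,4}c_{2,4}+a_{1,4}d_{2,4}$ land in the $(1,4)$ and $(2,4)$ columns, respectively.

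Once the intertwining is established, the conclusion is automatic: $\phi$ carries the multiplication of $Mat^{S^2}_2$ onto that of the associative unital algebra $M_2(k)\times M_2(k)\times M_2(k)$, so bilinearity and associativity pull back, and the unit of $Mat^{S^2}_2$ is the unique element mapping to $(I_2,I_2,I_2)$. A short computation identifies this unit with $I^{S^2}_2$, the matrix form of $E^{(2)}_2$, which parallels the role of the identity matrix under ordinary matrix multiplication. The main, and essentially only, obstacle is the bookkeeping in the column-by-column check, in particular keeping the reversed third pair straight; everything else follows formally from the isomorphism.
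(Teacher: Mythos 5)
Your proof is correct, and it takes a cleaner route than the paper does. The paper simply asserts that the lemma ``follows straightforwardly by verifying the associativity and distributivity of Equation \ref{S2Mult}'' --- i.e.\ a direct check of the algebra axioms on the six-column formula --- while relegating the column groupings $(1,2)\!\leftrightarrow\!(3,4)$, $(1,3)\!\leftrightarrow\!(2,3)$, $(2,4)\!\leftrightarrow\!(1,4)$ to an informal remark. You instead promote that remark to the actual proof: you define the linear bijection $\phi=(\phi_1,\phi_2,\phi_3)$ onto $M_2(k)\times M_2(k)\times M_2(k)$, verify the single identity $\phi(A\cdot C)=(\phi_1(A)\phi_1(C),\phi_2(A)\phi_2(C),\phi_3(A)\phi_3(C))$ (I checked all six columns against Equation \ref{S2Mult}, including the reversed ordering in the third block, and they match), and then pull back associativity, bilinearity, and the unit by transport of structure. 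What your approach buys: the only computation is one intertwining check rather than a three-fold associativity verification; you get Lemma \ref{dim2Unit} for free, since the unit must be $\phi^{-1}(I_2,I_2,I_2)=I^{S^2}_2$; and you obtain the stronger structural statement $Mat^{S^2}_2\cong M_2(k)^3$ as $k$-algebras, which the paper never makes explicit. What the paper's route buys is only that it avoids introducing the map $\phi$; for the generalization to $d>2$ via leg submatrices (Lemma \ref{MatAlgLemmaFull}), your isomorphism viewpoint is in fact the one that scales.
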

The proof to Lemma \ref{MatAlgLem} follows straightforwardly by verifying the associativity and distributively of Equation \ref{S2Mult}, as we know $Mat^{S^2}_2$ is a vector space. Now, another interesting question is about the unit of $Mat^{S^2}_2$. Since the matrix associated to $E^{(2)}_2$ fits into a generalization of the identity matrix, this appears to be a good candidate, which is established in the following lemma.
\begin{lemma}\label{dim2Unit}
    The multiplicative unit of $Mat^{S^2}_2$ is the matrix associated to $E^{(2)}_2$.
\end{lemma}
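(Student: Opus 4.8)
The plan is to exhibit the matrix form $I^{S^2}_2$ of $E^{(2)}_2$ explicitly and then verify that it is a two-sided identity for the product in Equation \ref{S2Mult}; since the multiplicative unit of a $k$-algebra is unique whenever it exists, showing that $I^{S^2}_2$ is a two-sided unit is enough to identify it as \emph{the} unit. First I would compute $I^{S^2}_2$ from the definition of $E^{(2)}_d$ with $d=2$, where $S_1=\{1,2\}$ and $S_2=\{3,4\}$. Checking the parity of $i+j$ for each pair $1\le i<j\le 4$ gives $e_{1,2}=e_1$, $e_{1,3}=e_1$, $e_{1,4}=e_2$, $e_{2,3}=e_2$, $e_{2,4}=e_1$, and $e_{3,4}=e_2$, so that, with columns in dictionary order,
\[
I^{S^2}_2=\begin{pmatrix} 1 & 1 & 0 & 0 & 1 & 0 \\ 0 & 0 & 1 & 1 & 0 & 1 \end{pmatrix}.
\]

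The cleanest route is then to invoke the block reformulation recorded in the remark following Equation \ref{S2Mult}. Grouping the columns into the three ordered pairs $\big((1,2),(3,4)\big)$, $\big((1,3),(2,3)\big)$, and $\big((2,4),(1,4)\big)$ turns the product of Equation \ref{S2Mult} into three independent ordinary $2\times 2$ matrix products, one per block. Under this grouping each block of $I^{S^2}_2$ is precisely the $2\times 2$ identity $I_2$: the columns $(1,2),(3,4)$ give $\left(\begin{smallmatrix}1&0\\0&1\end{smallmatrix}\right)$, the columns $(1,3),(2,3)$ give $\left(\begin{smallmatrix}1&0\\0&1\end{smallmatrix}\right)$, and the columns $(2,4),(1,4)$ give $\left(\begin{smallmatrix}1&0\\0&1\end{smallmatrix}\right)$. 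Since $I_2$ is a two-sided unit for ordinary $2\times 2$ matrix multiplication, it follows at once that $I^{S^2}_2\cdot C=C=C\cdot I^{S^2}_2$ for every $C\in Mat^{S^2}_2$, so $I^{S^2}_2$ is a two-sided unit and hence the multiplicative unit.

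I expect the verification itself to be routine; the only point requiring care is the bookkeeping of column orders, particularly in the third block, where the natural pairing is $\big((2,4),(1,4)\big)$ rather than dictionary order, so that the entries of Equation \ref{S2Mult} must be matched to the entries of the $2\times 2$ product with the correct ordering. As an elementary alternative to the block argument, one may instead substitute the entries of $I^{S^2}_2$ directly into Equation \ref{S2Mult}: in each of the twelve output entries the coefficient multiplying one input entry is $1$ and the coefficient multiplying the other is $0$, so reading the product column by column yields $I^{S^2}_2\cdot C=C$ and $C\cdot I^{S^2}_2=C$ immediately. This route avoids appealing to the remark but is more tedious.
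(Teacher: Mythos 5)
Your proposal is correct: the matrix you compute for $I^{S^2}_2$ agrees with the paper's, and each of the three column groupings $\bigl((1,2),(3,4)\bigr)$, $\bigl((1,3),(2,3)\bigr)$, $\bigl((2,4),(1,4)\bigr)$ of $I^{S^2}_2$ is indeed the $2\times2$ identity, so the block argument goes through. Your route is slightly different from the paper's: the paper simply substitutes the entries of $I^{S^2}_2$ into Equation \ref{S2Mult} and reads off $I^{S^2}_2\cdot A=A$ entry by entry, declaring the right-multiplication check ``similar''; you instead lean on the remark that the product decomposes into three independent ordinary $2\times2$ matrix products, which reduces the claim to the fact that $I_2$ is a two-sided unit for ordinary multiplication and so disposes of left and right identity simultaneously. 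What your approach buys is economy and a clear reason \emph{why} the identity works (each block of $I^{S^2}_2$ is $I_2$), at the cost of relying on the unproved (though easily verified) block reformulation stated only as a remark; your fallback direct computation is exactly the paper's proof, so nothing is missing either way. The one bookkeeping point you flag --- that the third block must be ordered $\bigl((2,4),(1,4)\bigr)$ consistently in both factors and in the output --- is the right thing to be careful about, and it checks out.
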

\begin{proof}
    Let $I^{S^2}_2$ be the matrix associated to $E^{(2)}_2$. We will show that $I^{S^2}_2$ is a multiplicative identity with respect to left multiplication, as the computation for right multiplication is similar. 
    \begin{align*}
        &\begin{pmatrix}
            1&1&0&0&1&0\\
            0&0&1&1&0&1
        \end{pmatrix}\cdot \begin{pmatrix}
            a_{1,2}&a_{1,3}&a_{1,4}&a_{2,3}&a_{2,4}&a_{3,4}\\
            b_{1,2}&b_{1,3}&b_{1,4}&b_{2,3}&b_{2,4}&b_{3,4}
        \end{pmatrix}\\
        &=\small\begin{pmatrix}
            1\cdot a_{1,2}+0\cdot b_{1,2}&1\cdot a_{1,3}+0\cdot b_{1,3}&1\cdot a_{1,4}+0\cdot b_{1,4}&1\cdot a_{2,3}+0\cdot b_{2,3}&1\cdot a_{2,4}+0\cdot b_{2,4}&1\cdot a_{3,4}+0\cdot b_{3,4}\\
            0\cdot a_{1,2}+1\cdot b_{1,2}&0\cdot a_{1,3}+1\cdot b_{1,3}&0\cdot a_{1,4}+1\cdot b_{1,4}&0\cdot a_{2,3}+1\cdot b_{2,3}&0\cdot a_{2,4}+1\cdot b_{2,4}&0\cdot a_{3,4}+1\cdot b_{3,4}
        \end{pmatrix}\\
        &=\begin{pmatrix}
            a_{1,2}&a_{1,3}&a_{1,4}&a_{2,3}&a_{2,4}&a_{3,4}\\
            b_{1,2}&b_{1,3}&b_{1,4}&b_{2,3}&b_{2,4}&b_{3,4}
        \end{pmatrix}
    \end{align*}
\end{proof}
\end{landscape}
\begin{proposition}\label{Upperdim2}
    Let $U^{S^2}_2$ be the collection of $2\times 6$ $S^2$-Upper Triangular Matrices. Then, $U^{S^2}_2$ is a subalgebra of $Mat^{S^2}_2$.
\end{proposition}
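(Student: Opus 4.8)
The plan is to show that $U^{S^2}_2$ is closed under the multiplication given by Equation \ref{S2Mult}, since we already know from Lemma \ref{MatAlgLem} that $Mat^{S^2}_2$ is a $k$-algebra and from Lemma \ref{dim2Unit} that its unit $I^{S^2}_2$ is itself $S^2$-Upper Triangular. Because $S^2$-Upper Triangular matrices form a linear subspace (the defining conditions are homogeneous linear constraints on the entries), establishing closure under multiplication is all that remains to conclude that $U^{S^2}_2$ is a subalgebra. So the heart of the argument is a direct verification that the product of two $S^2$-Upper Triangular $2\times 6$ matrices is again $S^2$-Upper Triangular.

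First I would record explicitly what the $S^2$-Upper Triangular condition says in the $d=2$ case. Reading off $I^{S^2}_2$ from Lemma \ref{dim2Unit}, the $S^2$-diagonal indicator set picks out $ind(1,2)=ind(1,3)=ind(2,4)=1$ and $ind(1,4)=ind(2,3)=ind(3,4)=2$. Hence a matrix is $S^2$-Upper Triangular exactly when the three columns with index $1$ (namely $(1,2)$, $(1,3)$, $(2,4)$) have zero in their second row, while the three columns with index $2$ (namely $(1,4)$, $(2,3)$, $(3,4)$) are unconstrained. So for an $S^2$-Upper Triangular matrix the bottom-row entries satisfy $b_{1,2}=b_{1,3}=b_{2,4}=0$.

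Next I would take two such matrices, say with entries $a_{i,j}, b_{i,j}$ and $c_{i,j}, d_{i,j}$ respectively, both satisfying $b_{1,2}=b_{1,3}=b_{2,4}=0$ and $d_{1,2}=d_{1,3}=d_{2,4}=0$, and substitute into the product formula in Equation \ref{S2Mult}. I would then check that in the product matrix the bottom-row entries of the three index-$1$ columns vanish. Concretely, the $(1,2)$ bottom entry is $b_{1,2}c_{1,2}+b_{3,4}d_{1,2}$, the $(1,3)$ bottom entry is $b_{1,3}c_{1,3}+b_{2,3}d_{1,3}$, and the $(2,4)$ bottom entry is $b_{2,4}c_{2,4}+b_{1,4}d_{2,4}$; each of these is a sum of products in which one factor of every term is among the vanishing entries $b_{1,2},b_{1,3},b_{2,4},d_{1,2},d_{1,3},d_{2,4}$, so all three are zero. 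This shows the product again lies in $U^{S^2}_2$.

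I do not expect a genuine obstacle here, since the computation is short once the right entries are identified; the only subtlety is bookkeeping. The step that most requires care is correctly pairing each column's second-row entry in the product with the constrained entries of the factors—one must notice that the multiplication rule mixes the $(1,4)$ and $(2,4)$ columns (and similarly $(1,3)$ with $(2,3)$), so the vanishing of the \emph{correct} entries in each factor is what forces the product's index-$1$ columns to have zero bottom rows. After verifying closure, I would simply remark that the subspace property together with closure under the algebra operations and containment of the unit $I^{S^2}_2$ yields that $U^{S^2}_2$ is a subalgebra of $Mat^{S^2}_2$.
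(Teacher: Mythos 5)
Your proposal is correct and takes essentially the same route as the paper: since $U^{S^2}_2$ is a linear subspace, both arguments reduce to checking closure under the multiplication of Equation \ref{S2Mult}, which the paper does by displaying the product of two generic $S^2$-Upper Triangular matrices and you do equivalently by noting that each bottom-row entry of the index-$1$ columns $(1,2)$, $(1,3)$, $(2,4)$ of the product has a vanishing factor in every term. Your identification of the constraint ($b_{1,2}=b_{1,3}=b_{2,4}=0$) and the resulting cancellation match the paper's boxed-diagonal computation exactly.
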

\begin{proof}
    It suffices to show that $U^{S^2}_2$ is closed under multiplication, which comes from direct computation, as the $U^{S^2}_2$ is a vector subspace of $Mat^{S^2}_2$. We box the $S^2$-diagonal of the resulting matrix to verify it is an $S^2$-Upper Triangular Matrix.

    \begin{align*}
        \begin{pmatrix}
            a&b&c&e&g&h\\
            0&0&d&f&0&i
        \end{pmatrix}&\cdot\begin{pmatrix}
            j&k&m&p&r&s\\
            0&0&n&q&0&t
        \end{pmatrix}\\
        &=\begin{pmatrix}
            \boxed{aj}&\boxed{bk}&gm+cd&bp+eq&\boxed{gr}&as+ht\\
            0&0&\boxed{dn}&\boxed{fq}&0&\boxed{it}
        \end{pmatrix}
    \end{align*}
\end{proof}
\subsubsection{The Case $d>2$}
Now, we will construct $U_d^{S^2}$ for $d>2$.
\begin{definition}
    Let A be a $d\times d(2d-1)$ matrix associated with the simple tensor $\otimes_{1\leq i<j\leq 2d}(c_{i,j})$. Let $1\leq n\leq 2d-2$ and let $(\Gamma_1,\Gamma_2,\ldots,\Gamma_d)$ be the edge $d$-partition associated with $E^{(2)}_d$.

    For $1\leq a\leq d$, let $u_a$ be the vector in $\{c_{i,j}|1\leq i<j\leq 2d\}$ such that $(i,j)$ is a left leg of the twin star graph $TS_d(2a-1,2a)=\Gamma_a$ with leg number $n$. Take $A^L_n$ be the matrix associated with $u_1\otimes u_2\otimes\ldots\otimes u_d$. Similarly, for $1\leq b\leq d$, let $v_b$ be the vector in $\{c_{i,j}|1\leq i<j\leq 2d\}$ such that $(i,j)$ is a right leg of the graph $TS_d(2b-1,2b)=\Gamma_b$ with leg number $n$. Take $A^R_n$ to be the matrix associated with $v_1\otimes v_2\otimes\ldots\otimes v_d$. Lastly, let $A^C$ be the matrix associated with $c_{1,2}\otimes c_{3,4}\otimes c_{5,6}\otimes\ldots\otimes c_{2d-1,2d}$.

    We call $A^L_n$, $A^R_n$, and $A^C$ the leg submatrices of $A$.
\end{definition}
\begin{remark}
    Notice that the leg submatrices of $A$ are $d\times d$ matrices. Further, the leg submatrices of $A$ are triangular if and only if $A$ is $S^2$-Triangular.
\end{remark}
\begin{example}
    Now, we will give an example of some leg submatrices. First, take the matrix $B$ previously introduced as
    \[B=\tiny\begin{array}{ccccccccccccccccc}
&(1,2)&(1,3)&(1,4)&(1,5)&(1,6)&(2,3)&(2,4)&(2,5)&(2,6)&(3,4)&(3,5)&(3,6)&(4,5)&(4,6)&(5,6)& \\
\ldelim({3}{4mm}&1&1&0&1&b&c&d&0&1&0&0&0&0&0&m&\rdelim){3}{4mm}\\
    &a&0&0&0&0&1&e&0&0&1&1&0&0&1&0&\\
    &0&0&1&0&1&0&f&1&0&0&0&l&1&0&n&
\end{array}.\]
First, consider $B^C$. This is the submatrix given by columns $(1,2)$, $(3,4)$, and $(5,6)$.
\[B^C=\begin{pmatrix}
    1&0&m\\
    \alpha&1&0\\
    0&0&n
\end{pmatrix}\]
Next, consider $B^R_2$. For this, we want to find the edges with leg number two that belong to $R_3(2a-1,2a)$ for some $1\leq a\leq d$. For example, we know that $R_3(1,2)=\{(2,4),(2,6)\}$, $R_3(3,4)=\{(1,4),(4,6)\}$, and $R_3(5,6)=\{(1,6),(3,6)\}$. So, $A^R_2$ consists of the columns of $B$ labelled $(2,6)$, $(4,6)$, and $(3,6)$.
\[B^R_2=\begin{pmatrix}
    1&0&0\\
    0&1&0\\
    0&0&l
\end{pmatrix}\]
Lastly, consider $B^L_1$. For this, we want to find the edges with leg number one that belong to $L_3(2a-1,2a)$ for some $1\leq a\leq d$. We know that $L_3(1,2)=\{(1,3),(1,5)\}$, $L_3(3,4)=\{(2,3),(3,5)\}$, and $L_3(5,6)=\{(2,5),(4,5)\}$. So, $B^L_1$ consists of the columns of $A$ labelled $(1,3)$, $(2,3)$, and $(2,5)$.
\[B^L_1=\begin{pmatrix}
    1&e&0\\
    0&1&0\\
    0&0&1\\
\end{pmatrix}\]
\end{example}
Next, we give a new operation that generalizes what was previously constructed in Equation \ref{S2Mult}.
\begin{definition}\label{LIMDef}
    Let $(\Gamma_1,\Gamma_2,\ldots,\Gamma_d)$ be the edge $d$-partition associated with $E^{(2)}_d$.
    
    Let $A$ be a $d\times d(2d-1)$ matrix with associated simple tensor $\otimes_{1\leq i<j\leq 2d}(a_{i,j})$. Let $A^L_i$, $A^R_i$, and $A^C$ be the leg submatrices for $1\leq i\leq 2d-2$. 
    
    Similarly, let $B$ be a $d\times d(2d-1)$ matrix with associated simple tensor $\otimes_{1\leq i<j\leq 2d}(b_{i,j})$. Let $B^L_i$, $B^R_i$, and $B^C$ be the leg submatrices for $1\leq i\leq 2d-2$. 

    Define the matrix $A\odot B$ with associated simple tensor $\otimes_{1\leq i<j\leq 2d}(v_{i,j})$ as follows. Let $1\leq m<n\leq 2d$ with $(m,n)$ an edge in $\Gamma_a$ for $1\leq a\leq d$ and leg number $l_0$. If $(m,n)$ is a left leg of $\Gamma_a$, then $v_{i,j}$ is given by the $a^{\textrm{th}}$ column of the matrix $A^L_{l_0}\cdot B^L_{l_0}$, where the multiplication here is the usual matrix multiplication. If $(m,n)$ is a right leg of $\Gamma_a$, then $v_{i,j}$ is given by the $a^{\textrm{th}}$ column of the matrix $A^R_{l_0}\cdot B^R_{l_0}$, where the multiplication is the usual matrix multiplication. Lastly, if $(m,n)=(2a-1,2a)$, then $v_{i,j}$ is given by the $a^{\textrm{th}}$ column of the matrix $A^C\cdot B^C$, where the multiplication is the usual matrix multiplication. We will call the operation $\odot$ the Leg Identifying Multiplication (LIM) of $A$ and $B$.
\end{definition}
\begin{remark}
    Leg Identifying Multiplication is the terminology used here as it involves grouping together columns with the same leg number and are either all in $L_d$, $R_d$, or $C_d$. 
\end{remark}
\begin{remark}
    Here we will maintain a distinction between $\cdot$ for the usual matrix multiplication and $\odot$ for LIM.
\end{remark}
\begin{landscape}
\begin{example}
    Now, we will give two examples. First, we will multiply $I^{S^2}_3\odot B$ using LIM, where
    \[B=\tiny\begin{array}{ccccccccccccccccc}
&(1,2)&(1,3)&(1,4)&(1,5)&(1,6)&(2,3)&(2,4)&(2,5)&(2,6)&(3,4)&(3,5)&(3,6)&(4,5)&(4,6)&(5,6)& \\
\ldelim({3}{4mm}&1&1&0&1&b&c&d&0&1&0&0&0&0&0&m&\rdelim){3}{4mm}\\
    &a&0&0&0&0&1&e&0&0&1&1&0&0&1&0&\\
    &0&0&1&0&1&0&f&1&0&0&0&l&1&0&n&
\end{array}.\]
Notice that the leg submatrix $(I^{S^2}_3)^C$ is the identity matrix $I_3$. In particular, $I_3\cdot B^C=B^C$. This means that since we have the vector $\begin{pmatrix}
        1\\
        \alpha\\
        0
    \end{pmatrix}$ in the first column of $(I^{S^2}_3)^C\cdot B^C$, which is labelled $(1,2)$, the $(1,2)$ column of $I^{S^2}_3\odot B$ is given by $\begin{pmatrix}
        1\\
        \alpha\\
        0
    \end{pmatrix}$. We could continue this computation to verify $I^{S^2}_3\odot B=B$.

    Next, we will multiply $A\otimes B$, where $A$ is given by 
    \[A=\tiny\begin{array}{ccccccccccccccccc}
&(1,2)&(1,3)&(1,4)&(1,5)&(1,6)&(2,3)&(2,4)&(2,5)&(2,6)&(3,4)&(3,5)&(3,6)&(4,5)&(4,6)&(5,6)& \\
\ldelim({3}{4mm}&a&b&c&e&0&f&g&0&1&h&0&0&0&0&l&\rdelim){3}{4mm}\\
    &a&0&m&0&0&1&q&0&e&1&1&0&k&1&0&\\
    &0&0&1&0&1&0&f&1&0&0&0&l&1&z&n&
\end{array}.\]
    First, notice that
    \[A^C=\begin{pmatrix}
        a&h&l\\
        a&1&0\\
        0&0&n
    \end{pmatrix},\]
    and
    \[B^C=\begin{pmatrix}
        1&0&m\\
        a&1&0\\
        0&0&n
    \end{pmatrix}.\]
    Therefore,
    \[A^C\cdot B^C=\begin{pmatrix}
        a+ah&h&am+ln\\
        a^2+a&ah+1&al\\
        0&0&n^2
    \end{pmatrix}.\]
    Now, we can use $A^C\cdot B^C$ for columns $(1,2), (3,4),$ and $(5,6)$ of $A\odot B$
    \[A\odot B=\tiny\begin{array}{ccccccccccccccccc}
&(1,2)&(1,3)&(1,4)&(1,5)&(1,6)&(2,3)&(2,4)&(2,5)&(2,6)&(3,4)&(3,5)&(3,6)&(4,5)&(4,6)&(5,6)& \\
\ldelim({3}{4mm}&a+h&&&&&&&&&h&&&&&am+ln&\rdelim){3}{4mm}\\
    &a^2+a&&&&&&&&&ah+1&&&&&al&\\
    &0&&&&&&&&&0&&&&&n^2&
\end{array}.\]
    Similarly, we can consider $A^L_1$ and $B^L_1$. Remember that the first left legs are the edges $(1,3)$, $(2,3)$, and $(2,5)$. So, using these columns, we get 
    \[A^L_1=\begin{pmatrix}
        b&f&0\\
        0&1&0\\
        0&0&1
    \end{pmatrix}\]
    and
    \[B^L_1=\begin{pmatrix}
        1&c&0\\
        0&1&0\\
        0&0&1
    \end{pmatrix}.\]
    Thus,
    \[A^L_1\cdot B^L_1=\begin{pmatrix}
        b&bc+f&0\\
        0&1&0\\
        0&0&1
    \end{pmatrix}\]
    which allows us to fill in positions $(1,3)$, $(2,3)$, and $(2,5)$ of $A\odot B$.
    \[A\odot B=\tiny\begin{array}{ccccccccccccccccc}
&(1,2)&(1,3)&(1,4)&(1,5)&(1,6)&(2,3)&(2,4)&(2,5)&(2,6)&(3,4)&(3,5)&(3,6)&(4,5)&(4,6)&(5,6)& \\
\ldelim({3}{4mm}&a+h&b&&&&bc+f&&0&&h&&&&&am+ln&\rdelim){3}{4mm}\\
    &a^2+a&0&&&&1&&0&&ah+1&&&&&al&\\
    &0&0&&&&0&&1&&0&&&&&n^2&
\end{array}.\]
    Similarly, the other leg submatrices can be used to obtain the other entries in $A\odot B$.
   \[A\odot B=\tiny\begin{array}{ccccccccccccccccc}
&(1,2)&(1,3)&(1,4)&(1,5)&(1,6)&(2,3)&(2,4)&(2,5)&(2,6)&(3,4)&(3,5)&(3,6)&(4,5)&(4,6)&(5,6)& \\
\ldelim({3}{4mm}&a+h&b&0&e&bg&bc+f&gd+ce&0&1&h&0&0&0&0&am+ln&\rdelim){3}{4mm}\\
    &a^2+a&0&0&0&bq&1&dq+em&0&e&ah+1&1&0&k&1&al&\\
    &0&0&1&0&bf+1&0&df+e+f&1&0&0&0&l^2&1&z&n^2&
\end{array}.\]
\end{example}
Now, notice that it follows directly that we have the following parallels to Lemmas \ref{MatAlgLem} and \ref{dim2Unit}.
\begin{lemma}\label{MatAlgLemmaFull}
    Let $Mat^{S^2}_d$ be the collection of $d\times d(2d-1)$ matrices. Then, $Mat^{S^2}_d$ is a k-algebra with multiplication given by LIM (Definition \ref{LIMDef}) and the matrix associated with $E^{(2)}_d$ is the multiplicative identity.
\end{lemma}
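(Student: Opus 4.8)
The plan is to realize $Mat^{S^2}_d$ as a finite direct product of ordinary matrix algebras, which makes every algebra axiom immediate. Write $M_d(k)$ for the associative unital $k$-algebra of $d\times d$ matrices under usual multiplication. The decomposition of a matrix $A\in Mat^{S^2}_d$ into its leg submatrices furnishes a map
\[\Phi:Mat^{S^2}_d\longrightarrow M_d(k)^{2d-1},\qquad \Phi(A)=\bigl(A^C,\,A^L_1,\ldots,A^L_{d-1},\,A^R_1,\ldots,A^R_{d-1}\bigr),\]
and I would show that $\Phi$ is an isomorphism of $k$-algebras, the target carrying componentwise addition, scalar multiplication, and multiplication.

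First I would check that $\Phi$ is a linear bijection, which reduces to the combinatorial claim that the leg submatrices genuinely repartition the columns of $A$. Since $(\Gamma_1,\ldots,\Gamma_d)=(TS_d(1,2),\ldots,TS_d(2d-1,2d))$ is the edge $d$-partition associated with $E^{(2)}_d$, every edge $(i,j)$ of $K_{2d}$ lies in exactly one twin star $\Gamma_a$, and within that twin star is either the center edge $(2a-1,2a)$, a unique left leg, or a unique right leg, each carrying a well-defined leg number. By the definition of $TS_d$, each twin star has exactly $d-1$ left legs and $d-1$ right legs, so for every leg number $n$ the left legs with leg number $n$, one drawn from each $\Gamma_a$, assemble into a genuine $d\times d$ matrix $A^L_n$ whose $a$-th column comes from $\Gamma_a$; the same holds for $A^R_n$ and for $A^C$. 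Consequently each of the $d(2d-1)$ columns of $A$ is recorded in exactly one entry of exactly one leg submatrix, so $\Phi$ is a linear bijection. This bookkeeping step, confirming a disjoint and exhaustive repartition with indexing consistent across the $d$ twin stars, is the one place requiring genuine care; everything after it is formal.

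Next I would verify that $\Phi$ is multiplicative, which is precisely the content of Definition \ref{LIMDef}. That definition assigns to the column of $A\odot B$ indexed by a left leg of $\Gamma_a$ with leg number $l_0$ the $a$-th column of $A^L_{l_0}\cdot B^L_{l_0}$, and similarly for right legs and center edges. Reading this through $\Phi$ yields $(A\odot B)^L_{l_0}=A^L_{l_0}\cdot B^L_{l_0}$, $(A\odot B)^R_{l_0}=A^R_{l_0}\cdot B^R_{l_0}$, and $(A\odot B)^C=A^C\cdot B^C$, i.e.\ $\Phi(A\odot B)=\Phi(A)\Phi(B)$ componentwise. Bilinearity of $\odot$ follows in the same manner from bilinearity of ordinary matrix multiplication in each factor.

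With $\Phi$ an algebra isomorphism onto the product $M_d(k)^{2d-1}$, which is associative and unital, the associativity and distributivity of $\odot$ on $Mat^{S^2}_d$ are inherited, establishing the $k$-algebra structure and generalizing Lemma \ref{MatAlgLem}. For the identity I would observe that $\Phi(I^{S^2}_d)=(I_d,\ldots,I_d)$: the $S^2$-diagonal of $E^{(2)}_d$ consists of ones while all other entries vanish, so each leg submatrix of $I^{S^2}_d$ is the $d\times d$ identity (as seen for $(I^{S^2}_3)^C$ in the preceding example). Since $(I_d,\ldots,I_d)$ is the unit of $M_d(k)^{2d-1}$ and $\Phi$ is an algebra isomorphism, $I^{S^2}_d$ is the two-sided unit for $\odot$, generalizing Lemma \ref{dim2Unit}.
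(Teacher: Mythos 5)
Your proof is correct and is essentially the argument the paper intends: the paper offers no written proof beyond ``it follows directly,'' and the mechanism it relies on is exactly your observation that $\odot$ is componentwise ordinary matrix multiplication on the $2d-1$ leg submatrices, which you formalize cleanly as an algebra isomorphism $\Phi: Mat^{S^2}_d \to M_d(k)^{2d-1}$. One small point in your favor: your count of $d-1$ left legs and $d-1$ right legs per twin star (hence indices $1\leq n\leq d-1$) is the correct one, even though the paper's Definition of leg submatrices writes the range as $1\leq n\leq 2d-2$, which appears to be a typo for the total number $2d-2$ of non-central leg submatrices rather than the range of a single leg number.
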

Also, we can extend Proposition \ref{Upperdim2} as follows.
\begin{proposition}\label{PropUS2}
    Let $U^{S^2}_d$ be the collection of $d\times d(2d-1)$ $S^2$-Upper Triangular Matrices. Then, $U^{S^2}_d$ is a subalgebra of $Mat^{S^2}_d$.
\end{proposition}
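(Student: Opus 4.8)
The plan is to reduce closure of $U^{S^2}_d$ under $\odot$ to the familiar fact that ordinary upper triangular matrices are closed under multiplication. Lemma \ref{MatAlgLemmaFull} already tells us $Mat^{S^2}_d$ is a unital $k$-algebra with identity the matrix form $I^{S^2}_d$ of $E^{(2)}_d$, and the defining condition of $S^2$-Upper Triangularity (that each column $c_{i,j}$ lies in $\mathrm{span}(e_1,\ldots,e_{ind(i,j)})$) is a linear condition on the entries, so $U^{S^2}_d$ is automatically a vector subspace. Since $I^{S^2}_d$ is itself $S^2$-Upper Triangular, the identity lies in $U^{S^2}_d$. Thus the whole content of the proposition is closure of $U^{S^2}_d$ under the Leg Identifying Multiplication.

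The key step is a refinement of the remark following the leg-submatrix definition, namely the equivalence: a matrix $A$ is $S^2$-Upper Triangular if and only if each of its leg submatrices $A^C$, $A^L_n$, and $A^R_n$ is an ordinary upper triangular $d\times d$ matrix. To establish this I would fix an edge $(i,j)$ of $K_{2d}$ and let $a$ be the unique index with $(i,j)\in E(\Gamma_a)$, where $(\Gamma_1,\ldots,\Gamma_d)$ is the edge $d$-partition of $E^{(2)}_d$. Because $e_{i,j}=e_a$, we have $ind(i,j)=a$. By construction, the column $c_{i,j}$ of $A$ occupies exactly the $a$-th column of $A^C$, of $A^L_{l(i,j)}$, or of $A^R_{l(i,j)}$, according as $(i,j)$ is the center, a left leg, or a right leg of $\Gamma_a$. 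The $S^2$-Upper Triangular condition says $c_{i,j}$ is supported in rows $1,\ldots,a$, which is precisely the assertion that the $a$-th column of the relevant leg submatrix vanishes strictly below the diagonal. Running over all $d$ columns of each leg submatrix yields the equivalence in both directions.

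With this equivalence in hand, closure is immediate. By Definition \ref{LIMDef} the leg submatrices of $A\odot B$ are the ordinary matrix products $(A\odot B)^C=A^C\cdot B^C$, $(A\odot B)^L_n=A^L_n\cdot B^L_n$, and $(A\odot B)^R_n=A^R_n\cdot B^R_n$; equivalently, the assignment $A\mapsto(A^C,A^L_1,\ldots,A^L_{d-1},A^R_1,\ldots,A^R_{d-1})$ is an isomorphism of $Mat^{S^2}_d$ onto the product algebra $M_d(k)^{\,2d-1}$ that carries $\odot$ to componentwise matrix multiplication. If $A,B\in U^{S^2}_d$, then by the equivalence every factor of both tuples is upper triangular, and since a product of upper triangular $d\times d$ matrices is again upper triangular, every leg submatrix of $A\odot B$ is upper triangular. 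Applying the equivalence in reverse gives $A\odot B\in U^{S^2}_d$, which completes the argument and mirrors the direct computation already carried out for $d=2$ in Proposition \ref{Upperdim2}.

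The only genuine work is the bookkeeping inside the central equivalence: one must verify that the $ind$-value of every edge of $\Gamma_a$ equals $a$, and that each column of $A$ is routed to the $a$-th slot of the correct leg submatrix, so that the per-column support condition lines up row-by-row with ordinary upper triangularity. Once this indexing is pinned down there is no further estimation or computation; the result is a formal consequence of the product decomposition of $Mat^{S^2}_d$ together with closure of triangular matrices under multiplication.
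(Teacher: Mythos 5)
Your proof is correct and follows essentially the same route as the paper, which disposes of Proposition \ref{PropUS2} by remarking that the leg submatrices of an $S^2$-Upper Triangular matrix are ordinary Upper Triangular matrices, that LIM acts as componentwise matrix multiplication on leg submatrices, and that Upper Triangular matrices are closed under multiplication. You have merely supplied the indexing details (that $ind(i,j)=a$ for every edge $(i,j)$ of $\Gamma_a$ and that $c_{i,j}$ lands in the $a$-th column of the appropriate leg submatrix) that the paper leaves to the reader.
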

Notice that Proposition \ref{PropUS2} is easily verified directly, as it can be verified that the leg submatrices of $S^2$-Upper Triangular Matrices are Upper Triangular Matrices and Upper Triangular Matrices are closed under matrix multiplication.
\end{landscape}
\subsection{$S^2$-LU-Decomposition}
Lastly in this section, we will develop a parallel result to Proposition \ref{LUProp}.
\begin{proposition}\label{S2LU}
    Let $A$ be a $d\times d(2d-1)$ matrix and suppose that all of the leg submatrices have nonzero leading principal minors. Then, there exists an $S^2$-Lower Triangular Matrix $L$ and an $S^2$-Upper Triangular Matrix $U$ such that $A=LU$.
\end{proposition}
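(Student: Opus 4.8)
The plan is to reduce the $S^2$-LU decomposition to the classical LU decomposition (Proposition \ref{LUProp}) applied separately to each leg submatrix, exploiting the fact that Leg Identifying Multiplication is ``diagonal'' with respect to the decomposition of a matrix into its leg submatrices. Throughout I keep the convention of the paper, writing $\odot$ for LIM and $\cdot$ for ordinary matrix multiplication, so that the desired factorization reads $A = L \odot U$.

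First I would record the structural fact underlying Definition \ref{LIMDef}. The left legs of a fixed leg number $n$, the right legs of a fixed leg number $n$, and the center edges each form a block of exactly $d$ edges, one drawn from each subgraph $\Gamma_a$ of the edge $d$-partition associated with $E^{(2)}_d$; as $n$ ranges over its values and the type ranges over left, right, and center, these blocks partition all $d(2d-1)$ edges into $2d-1$ classes of size $d$. Consequently the assignment
\[
A\longmapsto \big(A^C,\ \{A^L_n\}_n,\ \{A^R_n\}_n\big)
\]
is a bijection from $Mat^{S^2}_d$ onto the corresponding product of copies of the algebra of $d\times d$ matrices over $k$. Moreover, reading off Definition \ref{LIMDef} directly, this bijection intertwines $\odot$ with componentwise ordinary matrix multiplication:
\[
(A\odot B)^C=A^C\cdot B^C,\qquad (A\odot B)^L_n=A^L_n\cdot B^L_n,\qquad (A\odot B)^R_n=A^R_n\cdot B^R_n.
\]
Combined with the observation recorded after the definition of the leg submatrices, namely that $A$ is $S^2$-Upper (resp.\ $S^2$-Lower) Triangular exactly when every leg submatrix of $A$ is upper (resp.\ lower) triangular, this reduces the whole problem to a statement about the individual $d\times d$ leg submatrices.

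Next I would apply the reduction. By hypothesis every leg submatrix $M\in\{A^C\}\cup\{A^L_n\}_n\cup\{A^R_n\}_n$ has nonzero leading principal minors, so Proposition \ref{LUProp} factors each as $M=L_M\cdot U_M$ with $L_M$ unit lower triangular and $U_M$ nonsingular upper triangular. Using the bijection of the previous paragraph, I would define $L$ (resp.\ $U$) to be the unique $d\times d(2d-1)$ matrix whose leg submatrices are the matrices $L_M$ (resp.\ $U_M$). Since every leg submatrix of $L$ is lower triangular and every leg submatrix of $U$ is upper triangular, $L$ is $S^2$-Lower Triangular and $U$ is $S^2$-Upper Triangular. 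Finally, for each leg submatrix one computes $(L\odot U)^{\bullet}=L^{\bullet}\cdot U^{\bullet}=L_M\cdot U_M=M=A^{\bullet}$, so $L\odot U$ and $A$ agree on every leg submatrix; because the leg submatrices determine a matrix uniquely, $A=L\odot U$.

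The main obstacle is concentrated entirely in the first paragraph: carefully establishing that the leg-type/leg-number labeling genuinely partitions $E(K_{2d})$ into $2d-1$ classes of size $d$, each meeting every $\Gamma_a$ exactly once, and that Definition \ref{LIMDef} really does assemble into componentwise matrix multiplication. This is the combinatorial heart of the argument and requires unwinding the structure of the twin star graphs $TS_d(2a-1,2a)$ and the leg-number enumeration. Once that compatibility is in place, the rest is a purely formal transport of the classical LU theorem across the bijection, so no genuinely new analysis is needed beyond Proposition \ref{LUProp}.
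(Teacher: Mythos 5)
Your proposal is correct and follows essentially the same route as the paper: both reduce the statement to the classical LU decomposition (Proposition \ref{LUProp}) applied to each leg submatrix, using the fact that LIM acts componentwise on leg submatrices. The paper merely asserts that the general case ``follows immediately'' and works out only the $d=2$ case explicitly, so your general formulation of the bijection $A\mapsto(A^C,\{A^L_n\},\{A^R_n\})$ and its compatibility with $\odot$ is a slightly more careful write-up of the same argument.
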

\begin{proof}
    This proposition follows immediately from the definition of the LIM and construction of leg submatrices. We will show this explicitly for $d=2$.
    
    Let $A$ be given by
    \[A=\begin{pmatrix}
        a_{1,2}&a_{1,3}&a_{1,4}&a_{2,3}&a_{2,4}&a_{3,4}\\
        b_{1,2}&b_{1,3}&b_{1,4}&b_{2,3}&b_{2,4}&b_{3,4}
    \end{pmatrix}.\]
    Then, we can construct matrices $L$ and $U$ by
    \[L=\begin{pmatrix}
        \boxed{1}&\boxed{1}&0&0&\boxed{1}&0\\
        \frac{b_{1,2}}{a_{1,2}}&\frac{b_{1,3}}{a_{1,3}}&\boxed{1}&\boxed{1}&\frac{b_{2,4}}{a_{2,4}}&\boxed{1}
    \end{pmatrix}\]
    and
    \[U=\begin{pmatrix}
    \boxed{a_{1,2}}&\boxed{a_{1,3}}&a_{1,4}&a_{2,3}&\boxed{a_{2,4}}&a_{3,4}\\
        0&0&\boxed{b_{1,4}-\frac{b_{2,4}}{a_{2,4}}a_{1,4}}&\boxed{b_{2,3}-\frac{b_{1,3}}{a_{1,3}}a_{2,3}}&0&\boxed{b_{3,4}-\frac{b_{1,2}}{a_{1,2}}a_{3,4}}
    \end{pmatrix},\]
    where the $S^2$-diagonal is boxed for clarity. Notice that these matrices are $S^2$-Lower Triangular and $S^2$-Upper Triangular, respectively. 
    
    These matrices are constructed from the corresponding leg matrices. For example, for $A^L_1$, which corresponds to columns $(1,3)$ and $(2,3)$, we have the matrix
    \[A^L_1=\begin{pmatrix}
        a_{1,3}&a_{2,3}\\
        b_{1,3}&b_{2,3}
    \end{pmatrix}\]
    which decomposes into matrices $U^L_1$ and $L^L_1$ by Proposition \ref{LUProp} since $A^L_1$ has nonzero leading principal minors. In particular,
    \[L^L_1=\begin{pmatrix}
        1&0\\
        \frac{b_{1,3}}{a_{1,3}}&1
    \end{pmatrix}\]
    and
    \[U^L_1=\begin{pmatrix}
        a_{1,3}&a_{2,3}\\
        0&b_{2,3}-\frac{b_{1,3}}{a_{1,3}}a_{2,3}
    \end{pmatrix}\]
    by direct computation. Using $L^L_1$ and $U^L_1$, we can construct columns $(1,3)$ and $(2,3)$ of $L$ and $U$ directly. The other columns are similar.
    
    Therefore, by using Equation \ref{S2Mult}, we have
    \begin{align*}
        LU&=\begin{pmatrix}
        1&1&0&0&1&0\\
        \frac{b_{1,2}}{a_{1,2}}&\frac{b_{1,3}}{a_{1,3}}&1&1&\frac{b_{2,4}}{a_{2,4}}&1
    \end{pmatrix}\cdot \begin{pmatrix}
    a_{1,2}&a_{1,3}&a_{1,4}&a_{2,3}&a_{2,4}&a_{3,4}\\
        0&0&b_{1,4}-\frac{b_{2,4}}{a_{2,4}}a_{1,4}&b_{2,3}-\frac{b_{1,3}}{a_{1,3}}a_{2,3}&0&b_{3,4}-\frac{b_{1,2}}{a_{1,2}}a_{3,4}
    \end{pmatrix}\\
    &\\
    &=\begin{pmatrix}
        a_{1,2}&a_{1,3}&a_{1,4}&a_{2,3}&a_{2,4}&a_{3,4}\\
        b_{1,2}&b_{1,3}&\frac{b_{2,4}}{a_{2,4}}a_{1,4}+b_{1,4}-\frac{b_{2,4}}{a_{2,4}}a_{1,4}&\frac{b_{1,3}}{a_{1,3}}a_{2,3}+b_{2,3}-\frac{b_{1,3}}{a_{1,3}}a_{2,3}&b_{2,4}&\frac{b_{1,2}}{a_{1,2}}a_{3,4}+b_{3,4}-\frac{b_{1,2}}{a_{1,2}}a_{3,4}
    \end{pmatrix}\\
    &\\
    &=A.
    \end{align*}
\end{proof}

\section*{Acknowledgment}
We thank Mihai Staic for his help related to the code for finding $det^{S^2}(A)$ for the case of $d\leq5$.

\bibliographystyle{amsalpha}

\end{document}